\tikzstyle arrowstyle=[scale=1.5]
\tikzstyle directed=[postaction={decorate,decoration={markings, mark=at position .9 with {\arrow[arrowstyle]{stealth}}}}]
\tikzstyle reverse directed=[postaction={decorate,decoration={markings, mark=at position .9 with {\arrowreversed[arrowstyle]{stealth};}}}]
\newtheorem{Theorem}{Theorem}[section]
\newtheorem{Corollary}{Corollary}[section]
\newtheorem{Definition}{Definition}[section]
\newtheorem{Conjecture}{Conjecture}[section]
\newtheorem{Lemma}{Lemma}[section]
\def\emptyset{\mbox{{\rm \O}}}
\newenvironment{proof}{
\noindent {\bf Proof.}\rm}%
{\mbox{}\hfill\rule{0.5em}{0.809em}\par}
\tikzstyle{vertex}=[circle, draw, inner sep=0pt, minimum size=6pt]
\newcommand{\vertex}{\node[vertex]}
\begin{document}
\title{\LARGE{\textbf{The generalized Tur\'{a}n number of long cycles in graphs and bipartite graphs
\thanks{This work was supported by the
the Natural Science Foundation of Xinjiang (No. 2020D04046) and the
National Natural Science
Foundation of China (No. 12261016, No. 11901498, No. 12261085).}}}}
\author{{Changchang Dong$^{a}$, Mei Lu$^{a}$, Jixiang Meng$^{b}$, Bo Ning$^{c}$\footnote{Corresponding
author.
E-mail: bo.ning@nankai.edu.cn}}
\\
\small  a. Department of Mathematical Sciences, Tsinghua University, Beijing 100084, China  \\
\small  b. College of Mathematics and System Sciences, Xinjiang University, Urumqi 830046, China  \\
\small  c. College of Computer Science, Nankai University, Tianjin 300071, China  \\
}

\date{}
\maketitle

\noindent {\small {\bf Abstract}\\
Given a graph $T$ and a family of graphs $\mathcal{F}$, the maximum number of copies of $T$ in an $\mathcal{F}$-free
graph on $n$ vertices is called the generalized Tur\'{a}n number, denoted by $ex(n, T , \mathcal{F})$. When $T= K_2$, it
reduces to the classical Tur\'{a}n number $ex(n, \mathcal{F})$.
 Let
$ex_{bip}(b,n, T , \mathcal{F})$ be the maximum number of copies of $T$ in
an $\mathcal{F}$-free bipartite graph with
 two parts of sizes $b$ and $n$, respectively.
Let $P_k$ be the path on $k$ vertices, $\mathcal{C}_{\ge k}$ be
the family of all cycles with length at least $k$ and $M_k$ be a matching with $k$
edges.
 In this article, we determine $ex_{bip}(b,n, K_{s,t}, \mathcal{C}_{\ge 2n-2k})$ exactly in a connected
bipartite graph $G$ with minimum degree $\delta(G) \geq r\ge 1$, for $b\ge n\ge 2k+2r$ and $k\in \mathbb{Z}$,
which generalizes a theorem of Moon and Moser, a theorem of Jackson and gives an affirmative evidence supporting a conjecture of Adamus and Adamus.
As corollaries of our main result, we determine $ex_{bip}(b,n, K_{s,t}, P_{2n-2k})$ and $ex_{bip}(b,n, K_{s,t}, M_{n-k})$ exactly in a connected
bipartite graph $G$ with minimum degree $\delta(G) \geq r\ge 1$, which generalizes a theorem of Wang.
Moreover, we determine $ex(n, K_{s,t}, \mathcal{C}_{\ge k})$ and $ex(n, K_{s,t}, P_{k})$ respectively in a connected
 graph $G$ with minimum degree $\delta(G) \geq r\ge 1$, which generalizes a theorem of Lu, Yuan and Zhang.
}
\\

\noindent {\small {\bf Keywords} the generalized Tur\'{a}n number,
 long cycle and path, bipartite graphs}\\

\noindent {\small {\bf AMS subject classification 2010} 05C35, 05C38.}

\section{Introduction}
Throughout this article we only consider finite simple graphs.
For an integer $n$, we define $[n]=\{1,2,\ldots ,n\}$.
For a graph $G$, we denote by $e(G)$ the edge number of $G$ and by $\delta(G)$ the minimum degree of $G$.
Let $d_G(v)$ and $N_G(v)$ denote, respectively, the \emph{degree} and
 \emph{neighbourhood} of a vertex $v\in V(G)$.
For a subgraph
$H$ of $G$, we denote
  $d_H (v) = |N_G(v)
\cap V (H)|$ and
use $G- H$ to denote the graph obtained from $G$ by deleting the vertices of
$H$ and the edges incident with at least one vertex in $H$.
The \emph{length} of a cycle or a path is the number of its edges.
We use $P_k,\mathcal{C}_{\ge k}$ and $M_k$ to denote the path on $k$
vertices, the family of all cycles with length at least $k$ and a matching with $k$ edges, respectively.
For positive integers $s$ and $t$, we use $K_s$ to denote the \emph{complete graph} on
$s$ vertices and use
 $K_{s,t}$ to denote the \emph{complete bipartite graph}
with two parts of size $s$ and $t$, respectively.
 A graph $G$ is called
\emph{Hamiltonian} if it contains a Hamilton cycle, i.e., a cycle that includes every
vertex of $G$.
A bipartite graph with the bipartition $\{X,Y\}$ is called \emph{balanced} if $|X|=|Y|$.

Let $T$ be a graph and $\mathcal{F}$ be a family of graphs.
We use $N(T, G)$ to denote the number of (not necessarily induced) copies
of $T$ in $G$
and say that $G$ is $\mathcal{F}$-free if there is no copy of any member of $\mathcal{F}$ in $G$.
The \emph{generalized Tur\'{a}n number}, denoted by $ex(n, T , \mathcal{F})$,
is
the maximum number of copies of $T$ in an $\mathcal{F}$-free
graph on $n$ vertices.  When $\mathcal{F}=\{F\}$, we write $ex(n, T , F)$ instead of $ex(n, T , \{F\})$.
When $T= K_2$, it
reduces to the classical \emph{Tur\'{a}n number} $ex(n, \mathcal{F})$.
 Let
$ex_{bip}(b,n, T , \mathcal{F})$ be the maximum number of copies of $T$ in
an $\mathcal{F}$-free bipartite graph with
 two parts of sizes $b$ and $n$, respectively.
 The generalized Tur\'{a}n number has received a lot of attention recently, see \cite{AS16,GG20,GMV19,GP19,GS20,GL12,MQ20}.

  Erd\H{o}s and Gallai \cite{EG59} first studied
the maximum number of edges in $P_k$-free graphs and $\mathcal{C}_{\ge k}$-free graphs on $n$ vertices and
characterized the extremal graphs for some values of $n$.
Kopylov \cite{Ko77} extended Erd\H{o}s and Gallai's results to 2-connected graphs by
determining $ex(n, \mathcal{C}_{\ge k})$ exactly in 2-connected graphs and determining $ex(n, P_{ k})$ exactly in connected graphs.
Luo \cite{Luo17} determined $ex(n, K_s,\mathcal{C}_{\ge k})$ exactly in 2-connected graphs
and determined $ex(n, K_s,P_ k)$ exactly in connected graphs, which generalized Kopylov's results to $K_s$.
In \cite{NP20},  Ning and Peng presented an extension of Luo's results by imposing minimum degree as a new parameter,
which was in the spirit of Kopylov's remark.
Another extension of Kopylov's results to $K_{s,t}$ was proposed by Lu, Yuan and Zhang \cite{LYZ21} in 2021s,
who determined $ex(n, K_{s,t},\mathcal{C}_{\ge k})$ exactly in 2-connected graphs and determining $ex(n, K_{s,t},P_{ k})$ exactly in connected graphs.
Inspired by Ning-Peng's results and Lu-Yuan-Zhang's results,
we present a generalization of Lu-Yuan-Zhang's results by imposing minimum degree as a new parameter in this article, and the results will be shown in Section 2.

%

In extremal graph theory, a natural problem on Hamilton cycles is: How many edges can guarantee the existence of a Hamilton cycle in a graph on $n$ vertices.
Ore \cite{Ore61} showed that the condition $e(G) \geq\left(\begin{array}{c}n-1 \\ 2\end{array}\right)+2$ is the answer.

\begin{Theorem} (Ore \cite{Ore61}) \label{Ore61}
Let $G$ be a graph on $n$ vertices. If
$$
e(G)>\left(\begin{array}{c}
n-1 \\
2
\end{array}\right)+1
$$
then $G$ contains a Hamilton cycle.
\end{Theorem}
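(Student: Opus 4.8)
The plan is to prove the contrapositive: assuming $G$ has no Hamilton cycle, I would show $e(G)\le \binom{n-1}{2}+1$ (here we may assume $n\ge 3$, since otherwise the statement is vacuous). First I would enlarge $G$ to an edge-maximal non-Hamiltonian graph $H$ on the same vertex set, that is, $H$ is obtained from $G$ by repeatedly adding non-edges as long as the result remains non-Hamiltonian. This process terminates because $K_n$ is Hamiltonian for $n\ge 3$, and it only increases the edge count, so $e(G)\le e(H)$ and it suffices to bound $e(H)$. By construction $H$ has the crucial property that for every pair of non-adjacent vertices $u,v$ the graph $H+uv$ is Hamiltonian while $H$ itself is not.

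The heart of the argument, and the step I expect to be the main obstacle, is a rotation lemma asserting that any two non-adjacent vertices $u,v$ of $H$ satisfy $d_H(u)+d_H(v)\le n-1$. Since $H+uv$ is Hamiltonian but $H$ is not, there is a Hamilton path $v_1v_2\cdots v_n$ of $H$ with $v_1=u$ and $v_n=v$. The key observation is that one cannot simultaneously have $uv_{i+1}\in E(H)$ and $vv_i\in E(H)$ for the same index $i$, for otherwise $v_1v_2\cdots v_iv_nv_{n-1}\cdots v_{i+1}v_1$ would be a Hamilton cycle of $H$, a contradiction. Hence the sets $A=\{v_i : uv_{i+1}\in E(H)\}$ and $B=\{v_i : vv_i\in E(H)\}$ are disjoint subsets of $\{v_1,\dots,v_{n-1}\}$; as $|A|=d_H(u)$ and $|B|=d_H(v)$, the disjointness forces $d_H(u)+d_H(v)\le n-1$. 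The care needed here is purely in the bookkeeping of the index shift, so that $A$ and $B$ both land inside an $(n-1)$-element set and no vertex is double-counted.

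Finally I would convert this degree bound into an edge bound. Since $H$ is non-Hamiltonian it is not complete, so it has a non-adjacent pair $u,v$; writing $S=V(H)\setminus\{u,v\}$, every edge of $H$ either lies inside $S$ or is incident to $u$ or to $v$, whence $e(H)\le \binom{n-2}{2}+d_H(u)+d_H(v)\le \binom{n-2}{2}+(n-1)=\binom{n-1}{2}+1$. Combined with $e(G)\le e(H)$, this contradicts the hypothesis $e(G)>\binom{n-1}{2}+1$ and completes the proof. I would also remark that the bound is sharp, as witnessed by $K_{n-1}$ together with one extra vertex joined to a single vertex of $K_{n-1}$, which has exactly $\binom{n-1}{2}+1$ edges and is non-Hamiltonian.
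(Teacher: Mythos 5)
Your proof is correct. Note that the paper does not actually prove this statement --- it is quoted as a classical result with a citation to Ore's 1961 paper --- so there is no in-paper argument to compare against. What you give is the standard proof: pass to an edge-maximal non-Hamiltonian supergraph $H$, use the rotation argument on a Hamilton $u$--$v$ path to get $d_H(u)+d_H(v)\le n-1$ for any non-adjacent pair $u,v$, and then count $e(H)\le\binom{n-2}{2}+d_H(u)+d_H(v)\le\binom{n-2}{2}+(n-1)=\binom{n-1}{2}+1$. All the delicate points check out: the saturation process terminates because $K_n$ is Hamiltonian for $n\ge 3$; the sets $A=\{v_i: uv_{i+1}\in E(H)\}$ and $B=\{v_i: vv_i\in E(H)\}$ are indeed disjoint subsets of $\{v_1,\dots,v_{n-1}\}$ (disjointness by the crossing-cycle argument, and containment because $uv\notin E(H)$ keeps $v_n$ out of both); and since $uv\notin E(H)$ the edges meeting $\{u,v\}$ number exactly $d_H(u)+d_H(v)$, with no double count. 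Your sharpness example ($K_{n-1}$ plus a pendant vertex) is also the right one. It is worth observing that your method --- an edge-maximal counterexample combined with a path-rotation lemma --- is precisely the template the paper itself uses for its main results: the proof of Theorem \ref{CB} takes a $(2n-2k+2)$-closure $G'$ of the graph and applies the bipartite rotation lemma (Lemma \ref{bi2we}), so your argument is the natural ancestor of the techniques deployed later in the paper.
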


In 1962s, Erd\H{o}s \cite{Er62}
given an exension of
Ore's theorem by adding a bound on the minimum degree as a new parameter.

\begin{Theorem} (Erd\H{o}s \cite{Er62})\label{Er62}
Let $G$ be a graph on $n$ vertices. If $\delta(G) \geq r$, where $1 \leq r \leq$ $\frac{n-1}{2}$, and
$$
e(G)>\max \left\{\left(\begin{array}{c}
n-r \\
2
\end{array}\right)+r^2,\left(\begin{array}{c}
n-\left\lfloor\frac{n-1}{2}\right\rfloor \\
2
\end{array}\right)+\left\lfloor\frac{n-1}{2}\right\rfloor^2\right\}
$$
then $G$ contains a Hamilton cycle.
\end{Theorem}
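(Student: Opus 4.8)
The plan is to prove the contrapositive: assuming $G$ is non-Hamiltonian with $\delta(G)\ge r$, I would bound $e(G)$ from above by $\max\{f(r),f(m)\}$, where $f(x)=\binom{n-x}{2}+x^2$ and $m=\lfloor\frac{n-1}{2}\rfloor$. First I would pass to the Bondy--Chv\'{a}tal closure $H=c(G)$, formed by repeatedly joining nonadjacent pairs whose degree sum is at least $n$. The rotation/insertion argument underlying Ore's Theorem~\ref{Ore61} shows that adding such an edge does not change Hamiltonicity, so $H$ is again non-Hamiltonian; moreover $e(H)\ge e(G)$, $\delta(H)\ge r$, and by construction every pair of nonadjacent vertices of $H$ has degree sum at most $n-1$. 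It therefore suffices to bound $e(H)$.

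Next I would locate a critical index in the degree sequence $d_1\le d_2\le\cdots\le d_n$ of $H$. Since $H$ is non-Hamiltonian it is not complete, so I can choose nonadjacent vertices $u,v$ maximizing $d(u)+d(v)$, writing $i:=d(u)\le d(v)$; from $d(u)+d(v)\le n-1$ I get $i<n/2$ and $d(v)\le n-1-i$. Examining the $n-1-d(v)\ge i$ non-neighbours of $v$, maximality of the chosen pair forces each of them to have degree at most $i$, so at least $i$ vertices satisfy $d_j\le i$; symmetrically, the non-neighbours of $u$ together with $u$ give at least $n-i$ vertices of degree at most $n-i-1$. Hence $d_i\le i$ and $d_{n-i}\le n-i-1$.

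With this index in hand, the counting is routine. Splitting the sorted sequence into the first $i$ terms (each $\le i$), the middle $n-2i$ terms (each $\le n-i-1$), and the last $i$ terms (each $\le n-1$) yields
\[2e(H)=\sum_{j=1}^n d_j\le i\cdot i+(n-2i)(n-i-1)+i(n-1)=(n-i)(n-i-1)+2i^2,\]
so $e(G)\le e(H)\le\binom{n-i}{2}+i^2=f(i)$. Finally, $\delta(H)\ge r$ gives $d_i\ge r$, hence $i\ge r$, while $i<n/2$ gives $i\le m$; since $f''(x)=3>0$, the function $f$ is convex and attains its maximum over $[r,m]$ at an endpoint, whence $f(i)\le\max\{f(r),f(m)\}$. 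This contradicts the hypothesis $e(G)>\max\{f(r),f(m)\}$, so $G$ must be Hamiltonian. The graph $K_d\vee(K_{n-2d}\cup\overline{K_d})$ realizes equality in the three-block bound for each $d$, confirming sharpness.

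The main obstacle is the middle step: extracting a single index $i$ with both $d_i\le i$ and $d_{n-i}\le n-i-1$ from the extremal nonadjacent pair. Everything else---the closure reduction, the block count, and the convexity optimization---is mechanical, but this is exactly the place where non-Hamiltonicity is converted into a quantitative degree-sequence deficiency, and it must be arranged so that the lower bound $i\ge r$ coming from the minimum-degree hypothesis survives.
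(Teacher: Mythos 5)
The paper states this theorem only as background, citing Erd\H{o}s's 1962 paper, and contains no proof of it; so there is nothing internal to compare against, and your argument must be judged on its own. It is correct and complete. The closure reduction is the standard Bondy--Chv\'{a}tal/Ore step and preserves non-Hamiltonicity, minimum degree, and edge count; the extraction of the index $i$ is sound: for the nonadjacent pair $u,v$ maximizing $d(u)+d(v)$ with $i=d(u)\le d(v)$, the $n-1-d(v)\ge i$ non-neighbours of $v$ each form a nonadjacent pair with $v$, so maximality gives them degree at most $i$ (hence $d_i\le i$), and dually the $n-1-i$ non-neighbours of $u$ together with $u$ itself all have degree at most $n-1-i$ (hence $d_{n-i}\le n-i-1$, using $2i\le n-1$). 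The three-block sum indeed simplifies to $(n-i)(n-i-1)+2i^2$, so $e(G)\le f(i)$ with $f(x)=\binom{n-x}{2}+x^2$, and since $i=d(u)\ge\delta\ge r$, $i\le\lfloor\frac{n-1}{2}\rfloor$, and $f''\equiv 3>0$, convexity pushes $f(i)$ to the endpoints, contradicting the hypothesis. Your extremal graph $K_d\vee(K_{n-2d}\cup\overline{K_d})$ is also the right sharpness example, having exactly $f(d)$ edges and minimum degree $d$. It is worth noting that your scheme (edge-maximal closure, converting non-Hamiltonicity into a degree-sequence deficiency, then optimizing a convex bound over $[r,\lfloor\frac{n-1}{2}\rfloor]$ at its endpoints) is precisely the template the paper itself follows, in amplified form, for its main results: there the closure is the $(2n-2k+2)$-closure, the deficiency is captured by Kopylov's $(\alpha+1)$-core together with P\'{o}sa-type lemmas, and the convex function is $f_{s,t}$ or $g_{s,t}$ evaluated at the endpoints $r$ and $h$.
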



Motivated by Erd\H{o}s' work, Moon and Moser \cite{MM63} presented corresponding result for balanced bipartite graphs.
\begin{Theorem} (Moon and Moser \cite{MM63})\label{MM63}
Let $G$ be a balanced bipartite graph on $2 n$ vertices, with minimum degree $\delta(G) \geq r$, where $1 \leq r \leq \frac{n }{2}$. If
$$
e(G)>n(n-r)+r^2,
$$
then $G$ contains a Hamilton cycle.
\end{Theorem}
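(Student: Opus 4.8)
The plan is to argue by contradiction, reducing to an edge-maximal non-Hamiltonian graph and then bounding its number of edges against the target value $n(n-r)+r^2$. Suppose $G$ is a balanced bipartite graph with parts $X,Y$ of size $n$, with $\delta(G)\ge r$ and $e(G)>n(n-r)+r^2$, but $G$ has no Hamilton cycle. Adding edges between $X$ and $Y$ one at a time while the graph stays non-Hamiltonian, I obtain a spanning supergraph $G^{*}$ that is edge-maximal non-Hamiltonian; passing from $G$ to $G^{*}$ can only increase the edge count and the minimum degree, so $G^{*}$ still satisfies $\delta(G^{*})\ge r$ and $e(G^{*})>n(n-r)+r^2$. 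Since $K_{n,n}$ is Hamiltonian for $n\ge 2$, $G^{*}$ is not complete bipartite, so it has a nonadjacent pair $x\in X$, $y\in Y$. The heart of the argument is a bipartite Ore-type inequality: every such pair satisfies
$$ d_{G^{*}}(x)+d_{G^{*}}(y)\le n. $$

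To establish this I would use the crossing (rotation) argument adapted to the bipartite setting. By maximality $G^{*}+xy$ has a Hamilton cycle, which must use $xy$, so $G^{*}$ contains a Hamilton path $P=v_1v_2\cdots v_{2n}$ with $v_1=x$ and $v_{2n}=y$. As $G^{*}$ is bipartite with $x\in X$, vertices of odd index lie in $X$ and those of even index lie in $Y$; in particular the neighbours of $x$ are among $\{v_{2j}\}$ and the neighbours of $y$ among $\{v_{2j-1}\}$. If for some $j$ one had both $x\sim v_{2j}$ and $y\sim v_{2j-1}$, then
$$ v_1v_{2j}v_{2j+1}\cdots v_{2n}v_{2j-1}v_{2j-2}\cdots v_1 $$
would be a Hamilton cycle of $G^{*}$, a contradiction. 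Hence $A=\{j:x\sim v_{2j}\}$ and $B=\{j:y\sim v_{2j-1}\}$ are disjoint subsets of $\{1,\dots,n\}$ with $|A|=d_{G^{*}}(x)$ and $|B|=d_{G^{*}}(y)$, giving the claimed inequality.

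With the degree inequality in hand I would finish by counting. Let $x_0\in X$ have minimum degree in $X$ and set $a=d_{G^{*}}(x_0)\ge r$. Writing $e(G^{*})=\sum_{y\in Y}d_{G^{*}}(y)$ and splitting $Y$ into the $a$ neighbours of $x_0$ (each of degree at most $n$) and the $n-a$ nonneighbours of $x_0$ (each nonadjacent to $x_0$, hence of degree at most $n-a$), I obtain
$$ e(G^{*})\le a\cdot n+(n-a)(n-a)=n^{2}-an+a^{2}. $$
The crucial point is that $a\le n-r$: if $a\ge n-r+1$, then any nonneighbour of $x_0$ would have degree at most $n-a\le r-1<r$, contradicting $\delta(G^{*})\ge r$ unless $x_0$ has no nonneighbour, i.e. $d_{G^{*}}(x_0)=n$; but a minimum-degree vertex of $X$ having degree $n$ forces $G^{*}=K_{n,n}$, which is Hamiltonian. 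Thus $r\le a\le n-r$, and since $g(a)=a^{2}-an+n^{2}$ is a convex parabola with axis $a=n/2\in[r,n-r]$ and $g(r)=g(n-r)=n(n-r)+r^{2}$, I conclude $e(G^{*})\le g(a)\le n(n-r)+r^{2}$, contradicting $e(G^{*})>n(n-r)+r^{2}$.

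I expect the main obstacle to be the counting step rather than the rotation lemma. The crossing argument is a routine bipartite adaptation of Ore's technique, whereas the delicate point is controlling the degree $a$ of the chosen minimum-degree vertex: one must rule out the regime $a>n-r$, where the quadratic bound $n^{2}-an+a^{2}$ would otherwise exceed the target. This is precisely where both hypotheses are used, the condition $\delta(G)\ge r$ to force complete bipartiteness once degrees are large, and $r\le n/2$ to guarantee that $n/2$ lies in the admissible interval $[r,n-r]$, so that the equal endpoint values $g(r)=g(n-r)$ deliver the exact extremal constant.
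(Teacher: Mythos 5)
Your proof is correct, but there is nothing in the paper to compare it against: Theorem \ref{MM63} is quoted as known background from Moon and Moser \cite{MM63}, and the paper never proves it. Taken on its own terms, your argument is sound at every step. The reduction to an edge-maximal non-Hamiltonian supergraph $G^{*}$ preserves both hypotheses; the crossing argument on a Hamilton $xy$-path correctly yields disjoint index sets $A,B\subseteq\{1,\ldots,n\}$ (the degenerate indices $j=1$ for $B$ and $j=n$ for $A$ are automatically excluded because $xy\notin E(G^{*})$), giving the Ore-type bound $d_{G^{*}}(x)+d_{G^{*}}(y)\le n$ for every nonadjacent cross pair; the count $e(G^{*})\le an+(n-a)^{2}$ with $a$ the minimum degree over $X$ is right; the exclusion of $a>n-r$ via the minimum-degree hypothesis (with the case $a=n$ forcing $K_{n,n}$) is exactly where $\delta(G)\ge r$ enters; and the convexity of $g(a)=a^{2}-an+n^{2}$ on $[r,n-r]$ with $g(r)=g(n-r)=n(n-r)+r^{2}$ closes the contradiction, using $r\le n/2$ to make the interval nonempty. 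It is worth noting how your route relates to the machinery the paper does develop for its own results: your crossing argument is precisely the first case in the proof of the bipartite P\'{o}sa-type Lemma \ref{bi2we}(i) (equivalently Corollary \ref{bi2}, due to Jackson), where a crossing pair on a maximal path produces a cycle through all of $V(P)$; and your endpoint-maximization of a convex quadratic parallels the paper's treatment of the convex functions $f_{s,t}$ and $g_{s,t}$, whose maxima over the admissible range of the degree parameter are likewise attained at the two endpoints $r$ and $h$. So your proof is a self-contained, elementary instance of the same two ideas the paper deploys in greater generality.
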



In 2009s, Adamus and Adamus \cite{AA09} wanted to generalize the above criteria to long cycles by
asking how many edges are needed in a balanced bipartite graph on $2n$ vertices, to ensure
the existence of a cycle of length exactly $2n - 2k$?
In particular, they settled the case of $k=1$.

\begin{Conjecture} (Adamus and Adamus \cite{AA09})\label{AA09}
Let $G$ be a balanced bipartite graph of order $2n$ and minimal degree $\delta (G)\ge r \ge 1$,
where $n \ge 2k + 2r$ and $k\in \mathbb{Z}$. If
$$e(G) > n(n - k - r) + r(k + r)$$
then $G$ contains a cycle of length $2n - 2k$.
\end{Conjecture}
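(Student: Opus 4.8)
The plan is to prove the statement by induction on $k$, keeping the target length $2n-2k$ fixed, with the Moon--Moser bound (Theorem~\ref{MM63}) as the base case $k=0$. Write $X,Y$ for the two colour classes, $|X|=|Y|=n$, and set $T(n,k)=n(n-k-r)+r(k+r)$. A direct computation gives $T(n,k)-T(n-1,k-1)=n-k$, so deleting one vertex $u\in X$ and one vertex $v\in Y$ turns the hypothesis for $(n,k)$ into the hypothesis for $(n-1,k-1)$ exactly when $u,v$ are non-adjacent and $d_G(u)+d_G(v)\le n-k$: then $e(G-\{u,v\})\ge e(G)-(n-k)>T(n,k)-(n-k)=T(n-1,k-1)$. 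Since $2(n-1)-2(k-1)=2n-2k$, a cycle of the required length in $G-\{u,v\}$ is already the cycle we want in $G$, and the numerical side conditions $n-1\ge 2(k-1)+2r$ and $r\le(n-1)/2$ survive because $n\ge 2k+2r$. This mirrors the behaviour of the conjectured extremal graph, namely $K_{n,n}$ with all edges between a fixed $(n-r)$-subset of $X$ and a fixed $(k+r)$-subset of $Y$ deleted: there a reduced vertex of $X$ and a reduced vertex of $Y$ are non-adjacent with degree sum $(n-k-r)+r=n-k$, and removing them lowers neither the minimum degree below $r$ nor the edge count below the next threshold.

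I would organize the inductive step as a dichotomy at each stage. Given the current balanced bipartite graph $H$ on $2m$ vertices with $\delta(H)\ge r$, $e(H)>T(m,j)$, and target $2(n-k)=2(m-j)$ (with $m-j=n-k$ constant along the recursion), I ask whether there is a non-adjacent cross pair $u,v$ with $d_H(u)+d_H(v)\le n-k$ whose deletion preserves $\delta\ge r$. If such a pair exists, delete it and recurse; after at most $k$ steps we reach a balanced bipartite graph on $2(n-k)$ vertices meeting the Moon--Moser hypotheses, so Theorem~\ref{MM63} supplies a Hamilton cycle of it, i.e.\ a cycle of length $2n-2k$ in $G$. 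If no safe pair is available, then the degree sums of non-adjacent cross pairs are forced upward to an Ore-type condition $d_H(u)+d_H(v)\ge n-k+1$, and here I would invoke a long-cycle theorem for bipartite graphs in the spirit of Jackson to extract a cycle avoiding only $2j$ vertices, that is, of length $2(m-j)=2n-2k$.

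The step I expect to be the main obstacle is reconciling the bound $d_H(u)+d_H(v)\le n-k$ with the requirement that deletion preserve $\delta\ge r$: a low-degree-sum pair may be \emph{unsafe} because removing it drops some third vertex below degree $r$. I would handle this by showing that vertices of degree exactly $r$ are scarce and pairwise far apart when $e(H)>T(m,j)$, so a safe low-degree-sum pair can always be selected unless $H$ is already so dense (minimum degree near $m/2$) that the Ore-type branch applies; quantifying ``scarce and far apart'' is the delicate bookkeeping. The second genuine difficulty is that both the Jackson/Ore branch and the conjecture itself demand a cycle of length \emph{exactly} $2n-2k$, not merely at least that length. To pin the length down I would take a shortest cycle $C$ with $|C|\ge 2n-2k$ and argue that if $|C|>2n-2k$ then, since $C$ omits fewer than $2k$ vertices while $e(G)$ is large, $C$ carries a chord $ab$ joining two vertices at distance three along $C$; rerouting across $ab$ deletes one vertex of each class and yields a cycle of length $|C|-2$ that is still $\ge 2n-2k$, contradicting minimality. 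Guaranteeing such a distance-three chord in every configuration---equivalently, a weak bipancyclicity statement over the relevant length range---is where the argument must work hardest, and it is exactly the gap between the $\mathcal{C}_{\ge 2n-2k}$-free formulation the paper settles and the exact-length conjecture of Adamus and Adamus.
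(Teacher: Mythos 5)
You should first be aware of the status of the statement you were asked to prove: Conjecture~\ref{AA09} is not proved anywhere in the paper. The authors state it as an open problem and explicitly describe their main result, Theorem~\ref{CB}, as ``affirmative evidence'' only --- it forbids all cycles of length \emph{at least} $2n-2k$, whereas the conjecture demands a cycle of length \emph{exactly} $2n-2k$. So there is no proof in the paper to compare yours against, and a complete argument here would be genuinely new. Parts of your skeleton are correct and relevant: the telescoping identity $T(n,k)-T(n-1,k-1)=n-k$ is right; deleting a non-adjacent cross pair $u,v$ with $d(u)+d(v)\le n-k$ removes at most $n-k$ edges, so the edge hypothesis propagates; the side conditions survive the step; the base case $k=0$ is exactly Moon--Moser (Theorem~\ref{MM63}); and you identify the correct extremal configuration $F_{n,n,n-k,r}$.

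However, the proposal has two genuine gaps --- which, to your credit, you name yourself --- and neither is minor bookkeeping. First, your dichotomy is false as stated: if no \emph{safe} pair exists, it does not follow that every non-adjacent cross pair has degree sum at least $n-k+1$; all the low-degree-sum pairs could simply be unsafe. Your proposed repair, that vertices of degree exactly $r$ are ``scarce and pairwise far apart'' above the edge threshold, is not only unproven but false in the relevant regime: in $F_{n,n,n-k,r}$ plus a single edge (a graph satisfying all hypotheses of the conjecture) there are roughly $k+r$ vertices of degree exactly $r$, pairwise at distance two since they share the common neighbourhood $C$. (Safe pairs do happen to exist in that graph, but that must be extracted from the edge bound, not from scarcity.) Second, and more fundamentally, the exact-length requirement: both your Ore-type branch (Jackson's theorem only yields a cycle of length \emph{at least} $2n-2k$) and your rerouting argument (shortcutting a shortest long cycle across a chord joining vertices at distance three along it) rest on an unproven weak-bipancyclicity claim in exactly this density range. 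That claim is precisely the gap separating Theorem~\ref{CB} from Conjecture~\ref{AA09}, so as it stands your plan reduces the open problem to another open problem. The induction-with-Moon--Moser-base idea is a reasonable program and could plausibly be one ingredient of an eventual proof, but the proposal is not a proof.
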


Jackson \cite{Ja85} determined the minimum number of edges in a bipartite graph without the parameter minimum degree
to ensure the existence of a cycle of length at least $2n-2k$.

\begin{Theorem} (Jackson \cite{Ja85})\label{Ja85}
 Let $t$ be an integer and $G=(X, Y ; E)$ be a bipartite graph. Suppose $|X|=n,|Y|=b$, where $b \geq n \geq n-k \geq 2$. If
$$
e(G)> \begin{cases}(b-1)(n-k-1)+n, & n \leq 2(n-k)-2 \\ (b-n+2k+3)(n-k-1), & n \geq 2 (n-k)-2\end{cases}
$$
then $G$ contains a cycle of length at least $2n-2k$.
\end{Theorem}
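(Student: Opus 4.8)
The plan is to argue by contraposition together with induction on $b$, exploiting the fact that the claimed bound is a piecewise-linear function of $b$ whose two branches have the same slope. Write $f(b)$ for the right-hand side of the displayed inequality. The branch that applies is decided by the sign of $(2k+2)-n$, which involves only $n$ and $k$; hence the relevant branch is fixed once and for all and does not change as $b$ varies. A one-line computation shows that in both branches $f(b)-f(b-1)=n-k-1$. Finally, since every cycle in a bipartite graph has even length and $2n-2k$ is even, the conclusion is equivalent to saying that $G$ has circumference at least $2(n-k)$, and I would assume for contradiction that every cycle of $G$ has length at most $2(n-k-1)$.

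The reduction step is as follows. Suppose some $y\in Y$ has $d_G(y)\le n-k-1$ and $b>n$. Delete $y$; the resulting graph has parts of sizes $n$ and $b-1\ge n$, lies in the same branch, and satisfies $e(G-y)=e(G)-d_G(y)\ge e(G)-(n-k-1)>f(b)-(n-k-1)=f(b-1)$. By the induction hypothesis $G-y$ already contains a cycle of length at least $2(n-k)$, and this cycle lies in $G$. Iterating, I am left with two terminal situations: either $b=n$, or every vertex of $Y$ has degree at least $n-k$. Thus the whole theorem reduces to a minimum-degree ``core'' statement together with the balanced base case $b=n$.

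For the core statement I would prove: if $|X|=n\le b=|Y|$ and $d_G(y)\ge n-k$ for every $y\in Y$, then $G$ has circumference at least $2(n-k)$. Take a longest cycle $C$, of length $2\ell$, and suppose for contradiction that $\ell\le n-k-1$, so that $|X\cap C|=\ell$. Every $y\in Y$ then has at least $n-k\ge \ell+1$ neighbours in $X$ but at most $\ell$ of them on $C$, so every vertex of $Y$ has a neighbour in $X\setminus C$. Feeding this into the rotation--extension analysis of the longest cycle --- in the bipartite setting the vertices around $C$ alternate between $X$ and $Y$, which constrains the possible chord and crossing configurations --- I expect to produce either a strictly longer cycle or a cycle of length exactly $2(n-k)$, in either case contradicting the choice of $C$. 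The balanced base case $b=n$ I would treat by a parallel longest-cycle argument, with the Hamiltonian instance $k=0$ handled directly by Theorem~\ref{MM63} of Moon and Moser (with $r=1$, since the edge bound there forces $\delta(G)\ge 1$).

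The main obstacle is the core statement, and within it the bipartite rotation--extension bookkeeping: one must show that having an off-cycle neighbour for \emph{every} vertex of the larger side, combined with the maximality of $C$, really does force a rerouting that lengthens the cycle. As a calibration check I would verify that $f$ is matched by explicit extremal graphs: for the first branch, the complete bipartite graph $K_{n-k-1,b}$ (with the $(n-k-1)$-set inside $X$) together with one pendant edge at each of the remaining $k+1$ vertices of $X$, which has exactly $(b-1)(n-k-1)+n$ edges and circumference $2(n-k-1)$; and the analogous denser configuration for the second branch. A reassuring consistency check is that both formulas collapse to $(k+1)(b+1)$ at the common boundary $n=2k+2$, confirming that the induction and the core statement are correctly calibrated.
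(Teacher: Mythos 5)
The paper contains no proof of Theorem~\ref{Ja85} to compare against: it is quoted verbatim from Jackson's 1985 paper as a known result. So your proposal can only be judged on its own merits. The preliminary parts are correct: the branch of the bound depends only on whether $n\ge 2k+2$; both branches have slope $n-k-1$ as functions of $b$; they agree (both equal $(k+1)(b+1)$) at $n=2k+2$; the extremal calibration ($K_{n-k-1,b}$ plus $k+1$ pendant edges has exactly $(b-1)(n-k-1)+n$ edges and circumference $2(n-k-1)$) checks out; and the deletion step (remove $y\in Y$ with $d_G(y)\le n-k-1$ when $b>n$) legitimately preserves all hypotheses and the strict edge inequality. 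So the induction does reduce the theorem to two terminal cases: the balanced case $b=n$, and the case where every vertex of $Y$ has degree at least $n-k$.

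The genuine gap is that these two terminal cases are where the entire content of Jackson's theorem lives, and you prove neither; you explicitly defer to ``rotation--extension bookkeeping'' that you ``expect'' to work. That expectation is not innocent: P\'osa's rotation--extension does not transfer to bipartite graphs, because a rotation keeps the path's endpoint in its own side of the bipartition and the parity of positions along the path rigidly constrains which chords can reroute it. This is exactly why Jackson had to build a different tool --- the lemma producing two internally disjoint $uv$-paths covering $N_P(T\cup\{u,v\})$ (reproduced as Lemma~\ref{g-bi} in this paper, from his 1981 paper), from which the bipartite analogue of P\'osa's lemma (Lemma~\ref{bi2we}, Corollary~\ref{bi2}) follows. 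Without such a lemma, the step ``every $y\in Y$ has an off-cycle neighbour, hence $C$ can be lengthened'' simply does not go through. A second, independent problem is that your core statement carries no connectivity hypothesis, while every bipartite rotation lemma (Lemma~\ref{g-bi}, Lemma~\ref{bi2we}) requires 2-connectivity: graphs built from $K_{n-k-1,m}$-blocks glued at cut vertices of $Y$, with pendant edges repairing the degrees, satisfy the degree-$\,(n-k)$ condition on all of $Y$ yet have circumference only $2(n-k-1)$; they fail to be counterexamples only because a block-tree count forces $|X|>|Y|$ for them, contradicting $b\ge n$. So the core statement additionally needs a genuine block/cut-vertex decomposition argument exploiting $b\ge n$ globally. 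As written, your proof reduces Jackson's theorem to statements essentially as hard as the theorem itself, and the tools named in the proposal (longest-cycle maximality plus Moon--Moser for $k=0$) are not sufficient to finish them.
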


From Theorem \ref{Ja85}, we have
 $\operatorname{ex}_{bip}(b, n, \mathcal{C}_{\ge 2n-2k})=(n-k-1)b+k+1$ if $b \geq n \geq n-k \geq \frac{n}{2}+1$.
Li and the forth author \cite{LN21} improved Theorem \ref{Ja85}.

\begin{Theorem} (Li and Ning \cite{LN21})\label{LN21}
$\operatorname{ex}_{bip}(b, n, C_{2n-2k})=(n-k-1)b+k+1$ if $b \geq n \geq n-k \geq \frac{n}{2}+1$.
 \end{Theorem}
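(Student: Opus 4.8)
The plan is to prove matching bounds, the lower bound being essentially free and the upper bound carrying all the work. For the lower bound I would reuse the extremal configuration behind Theorem \ref{Ja85}: choose $A\subseteq X$ with $|A|=n-k-1$, join every vertex of $A$ to all of $Y$, and give each of the remaining $k+1$ vertices of $X$ exactly one neighbour in $Y$. Each of these $k+1$ vertices has degree $1$ and hence lies on no cycle, so every cycle of the resulting graph $G$ meets $X$ only inside $A$ and thus has length at most $2(n-k-1)=2n-2k-2$. In particular $G$ is $C_{2n-2k}$-free, while $e(G)=(n-k-1)b+(k+1)$, which gives $\operatorname{ex}_{bip}(b,n,C_{2n-2k})\ge (n-k-1)b+k+1$. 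Equivalently, since every $\mathcal{C}_{\ge 2n-2k}$-free graph is a fortiori $C_{2n-2k}$-free, the remark following Theorem \ref{Ja85} already yields this inequality.

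For the upper bound I would show that $e(G)>(n-k-1)b+k+1$ forces a cycle of length exactly $2n-2k$ in any bipartite $G=(X,Y;E)$ with $|X|=n$ and $|Y|=b$. The starting observation is arithmetic: the hypothesis $n-k\ge \frac n2+1$ is equivalent to $n\le 2(n-k)-2$, which places us in the first branch of Theorem \ref{Ja85}, where the threshold equals $(b-1)(n-k-1)+n=(n-k-1)b+k+1$. Hence the edge condition already guarantees a cycle of length at least $2n-2k$. Among all such cycles let $C$ be one of minimum length $2\ell$, so $\ell\ge n-k$; if $\ell=n-k$ we are finished.

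The heart of the argument is to rule out $\ell>n-k$. Writing $C=v_1v_2\cdots v_{2\ell}v_1$ alternately through $X$ and $Y$, a chord $v_iv_{i+3}$ (indices modulo $2\ell$) would split off a $4$-cycle and leave a cycle of length $2\ell-2$, which is still at least $2n-2k$ yet shorter than $C$, contradicting minimality; likewise any chord or detour through $V(G)\setminus V(C)$ whose ``long side'' has length in $[2n-2k,2\ell-2]$ is forbidden. Thus minimality of $C$ severely restricts the chords of $C$ and the attachments of the off-cycle vertices: every chord must have both of its arcs shorter than $2n-2k$, and similarly for paths routed through $V(G)\setminus V(C)$. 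The plan is then to convert these structural restrictions into an edge count, showing that under them one necessarily has $e(G)\le (n-k-1)b+k+1$, contradicting the hypothesis.

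The main obstacle is exactly this last counting step. Because the threshold $(n-k-1)b+k+1$ coincides with Jackson's bound, there is no numerical slack to exploit: a single surplus edge must be shown to create a length-reducing chord or detour, and the bookkeeping has to account simultaneously for the chords of $C$, for the $n-\ell$ vertices of $X$ and the $b-\ell$ vertices of $Y$ lying off $C$, and for their mutual interaction along the arcs of $C$. I expect this to require a careful case analysis of how the off-cycle $X$- and $Y$-vertices may attach to $C$ without producing a cycle of intermediate length, together with an extremal estimate in the spirit of Jackson's argument applied to $C$ with its chords. Once that estimate is pinned down the contradiction is immediate, establishing $\operatorname{ex}_{bip}(b,n,C_{2n-2k})\le (n-k-1)b+k+1$ and hence the theorem.
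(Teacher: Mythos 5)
Your lower bound is correct and complete: the complete bipartite graph $K_{n-k-1,b}$ together with $k+1$ pendant vertices attached to $Y$ has exactly $(n-k-1)b+k+1$ edges and circumference $2(n-k-1)<2n-2k$, so it is $C_{2n-2k}$-free. Your arithmetic reduction is also right: $n-k\ge\frac{n}{2}+1$ is equivalent to $n\le 2(n-k)-2$, and in that branch Jackson's threshold $(b-1)(n-k-1)+n$ equals $(n-k-1)b+k+1$, so the edge hypothesis does produce a cycle of length \emph{at least} $2n-2k$ via Theorem \ref{Ja85}.

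However, the upper bound has a genuine gap, and it sits exactly where the entire content of the theorem lies. Be aware that the paper you are reading does not prove Theorem \ref{LN21} at all; it is quoted from Li and Ning's paper precisely because passing from ``a cycle of length at least $2n-2k$'' to ``a cycle of length exactly $2n-2k$'' is a substantial theorem in its own right --- it is what makes their result an improvement of Jackson's. Your proposal takes a shortest cycle $C$ of length $2\ell\ge 2n-2k$, correctly notes that minimality forbids chords $v_iv_{i+3}$ and, more generally, any chord or external detour whose long arc would close a cycle of length in $[2n-2k,2\ell-2]$, and then declares a ``plan'' to turn these restrictions into the estimate $e(G)\le (n-k-1)b+k+1$. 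That counting step is never carried out, and you yourself concede there is no numerical slack because the threshold coincides with Jackson's bound. This is not a bookkeeping detail one can wave at: a priori, a graph just above the threshold could have all of its long cycles of length strictly greater than $2n-2k$, with chords and attachments arranged to avoid every intermediate length, and excluding this is the hard structural work that occupies Li and Ning's paper (their argument analyzes longest cycles/paths with Jackson-type path lemmas and a delicate case analysis, not a one-line minimality count). As written, your argument proves only $\operatorname{ex}_{bip}(b,n,\mathcal{C}_{\ge 2n-2k})=(n-k-1)b+k+1$, i.e., the consequence of Theorem \ref{Ja85} already recorded in the paper, and not the statement about a single forbidden cycle length.
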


 Therefore
 if
 Conjecture \ref{AA09} is true, then it can be seen as
 a generalization of Theorem \ref{Ja85} and Theorem \ref{LN21} by imposing minimum degree as a new parameter.

In 2021s, Wang \label{Wang20} determined the exact value of the generalized Tur\'{a}n number of matchings.
He proved the following theorem.

\begin{Theorem} (Wang \cite{Wang20})\label{Wang20}
For any $s, t \geq 1$ and $n \geq n-k-1$, we have
$$
e x_{b i p}\left(n,n, K_{s, t}, M_{n-k}\right)= \begin{cases}\left(\begin{array}{c}
n-k-1 \\
s
\end{array}\right)\left(\begin{array}{c}
n \\
t
\end{array}\right)+\left(\begin{array}{c}
n-k-1 \\
t
\end{array}\right)\left(\begin{array}{c}
n \\
s
\end{array}\right), & s \neq t, \\
\left(\begin{array}{c}
n-k-1 \\
s
\end{array}\right)\left(\begin{array}{c}
n \\
s
\end{array}\right), & s=t\end{cases}
$$
\end{Theorem}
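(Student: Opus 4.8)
The plan is to establish matching lower and upper bounds, using König's theorem as the main engine on the upper side. Throughout, write $G=(X,Y;E)$ with $|X|=|Y|=n$, and recall that $G$ is $M_{n-k}$-free precisely when its maximum matching has size at most $n-k-1$, which by König's theorem is equivalent to possessing a vertex cover of size at most $n-k-1$.

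For the lower bound I would exhibit a single extremal graph. Fix a set $S\subseteq X$ with $|S|=n-k-1$ and form $G_0$ by joining every vertex of $S$ to all of $Y$, leaving $X\setminus S$ isolated. Then $S$ is a vertex cover of size $n-k-1$, so the maximum matching of $G_0$ has size at most $n-k-1$ and $G_0$ is $M_{n-k}$-free. Since every vertex of $S$ is adjacent to every vertex of $Y$, a copy of $K_{s,t}$ is obtained exactly by choosing its two parts inside $S$ and inside $Y$. Counting the two ways of assigning the sizes $s$ and $t$ to the $S$-side and the $Y$-side gives precisely $\binom{n-k-1}{s}\binom{n}{t}+\binom{n-k-1}{t}\binom{n}{s}$ copies when $s\neq t$, and $\binom{n-k-1}{s}\binom{n}{s}$ copies when $s=t$ (the two assignments coincide). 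This realizes the claimed value.

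For the upper bound, let $G$ be any $M_{n-k}$-free graph and let $C=C_X\cup C_Y$ be a minimum vertex cover, so $|C_X|+|C_Y|\le n-k-1$. The key structural observation is that every copy of $K_{s,t}$ in a bipartite graph has one part $A$ entirely in $X$ and the other part $B$ entirely in $Y$: a vertex adjacent to both an $X$-vertex and a $Y$-vertex of the opposite part would have to lie in $X\cap Y$. Moreover $A\subseteq C_X$ or $B\subseteq C_Y$, for otherwise some $a\in A\setminus C_X$ and $b\in B\setminus C_Y$ would leave the edge $ab$ uncovered. Classifying each copy as Type $1$ ($A\subseteq C_X$) or Type $2$ ($B\subseteq C_Y$), and bounding each class by choosing the cover-side part inside $C_X$ (resp.\ $C_Y$) and the other part freely inside the opposite $n$-vertex side, the number of copies is at most $\bigl(\binom{|C_X|}{s}+\binom{|C_Y|}{s}\bigr)\binom{n}{t}+\bigl(\binom{|C_X|}{t}+\binom{|C_Y|}{t}\bigr)\binom{n}{s}$ when $s\neq t$, with the analogous single-term bound $\bigl(\binom{|C_X|}{s}+\binom{|C_Y|}{s}\bigr)\binom{n}{s}$ when $s=t$.

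It remains to feed this into the target value, and here the decisive inequality is the superadditivity $\binom{a}{s}+\binom{b}{s}\le\binom{a+b}{s}$ for $s\ge 1$, which follows from Vandermonde's identity $\binom{a+b}{s}=\sum_{i=0}^{s}\binom{a}{i}\binom{b}{s-i}$ by retaining only the two extreme terms $i=0$ and $i=s$. Applying it with $a=|C_X|$ and $b=|C_Y|$, and then using $|C_X|+|C_Y|\le n-k-1$ with the monotonicity of $\binom{\cdot}{s}$ and $\binom{\cdot}{t}$, collapses the upper bound exactly to the lower-bound value, completing the proof. I expect the only genuine subtlety to be this final step: one must recognize that spreading the cover across both sides can never beat concentrating it on one side, which is precisely what the superadditivity encodes; the case $s=t$ must also be tracked separately so as to avoid an erroneous doubling of the count.
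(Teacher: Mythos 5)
Your proof is correct, but it takes a genuinely different route from the paper's: the paper never proves Theorem \ref{Wang20} at all (it is quoted from Wang, whose original argument uses the shifting method), and the paper's own related statement, Corollary \ref{MB}, is deduced from the long-path theorem (Theorem \ref{PB}), itself a consequence of the long-cycle theorem (Theorem \ref{CB}) proved via core decompositions $H(G,\alpha)$ and a bipartite P\'osa-type lemma (Lemma \ref{bi2we}); matchings enter there only through the fact that a path on $2n-2k$ vertices contains a matching with $n-k$ edges. You instead argue directly: K\"onig's theorem turns $M_{n-k}$-freeness into a vertex cover $C_X\cup C_Y$ with $|C_X|+|C_Y|\le n-k-1$; every copy of $K_{s,t}$ has its $X$-side part inside $C_X$ or its $Y$-side part inside $C_Y$ (otherwise an edge of the copy is uncovered); and the resulting count $\bigl(\binom{|C_X|}{s}+\binom{|C_Y|}{s}\bigr)\binom{n}{t}+\bigl(\binom{|C_X|}{t}+\binom{|C_Y|}{t}\bigr)\binom{n}{s}$ (a single term when $s=t$) collapses to the stated value via $\binom{a}{s}+\binom{b}{s}\le\binom{a+b}{s}$ for $s\ge 1$ (Vandermonde) together with monotonicity, while your lower-bound construction is exactly Wang's extremal graph. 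All steps check out, including the harmless overcount of copies belonging to both types and the genuine use of $s,t\ge 1$ in the superadditivity (which fails at $s=0$). Comparing the two approaches: yours is short, elementary and self-contained, and needs no connectivity or minimum-degree hypothesis --- note that Wang's extremal graph is disconnected with isolated vertices, so it is not even admissible under the hypotheses of Corollary \ref{MB} --- whereas the paper's heavier machinery is aimed at the stronger cycle and path theorems refined by the minimum-degree parameter $\delta(G)\ge r$ and unbalanced parts $b\ge n$, from which a matching bound falls out as a byproduct under those extra assumptions.
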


To our knowledge, there are no further references on
Conjecture \ref{AA09}. In this article, we give a first step towards confirming Conjecture \ref{AA09}, which is an affirmative evidence supporting
this conjecture.
In particular,
we determine $ex_{bip}(b,n, K_{s,t}, \mathcal{C}_{\ge 2n-2k})$ exactly in a connected
bipartite graph $G$ with minimum degree $\delta(G) \geq r\ge 1$, for $b\ge n\ge 2k+2r$ and $k\in \mathbb{Z}$,
which generalizes Theorem \ref{MM63} and Theorem \ref{Ja85} for $b \geq n \geq n-k \geq \frac{n}{2}+1$.
As corollaries of our main result, we determine $ex_{bip}(b,n, K_{s,t}, P_{2n-2k})$ and $ex_{bip}(b,n, K_{s,t}, M_{n-k})$ exactly in a connected
bipartite graph $G$ with minimum degree $\delta(G) \geq r\ge 1$, which generalizes Theorem \ref{Wang20}.
Moreover, we determine $ex(n, K_{s,t}, \mathcal{C}_{\ge k})$ in a 2-connected
 graph $G$ with minimum degree $\delta(G) \geq r\ge 1$, which generalizes a theorem of Lu, Yuan and Zhang.
Our results as shown in Section 2, and the proofs as demonstrate in Section 3.
The last section is devoted to some concluding remarks.
\section{Main Results}

For $n \geqslant 2k+2r,1\le a\le \lfloor\frac{n-k}{2}\rfloor $ and $k\in \mathbb{Z}$, let
$$
f_{s, t}(b,n,n-k,a)= \left(\begin{array}{c}
b \\
s
\end{array}\right)\left(\begin{array}{c}
n-k-a \\
t
\end{array}\right)+\left(\begin{array}{c}
a \\
s
\end{array}\right)\left(\begin{array}{c}
n \\
t
\end{array}\right)-
\left(\begin{array}{c}
a \\
s
\end{array}\right)
\left(\begin{array}{c}
n-k-a \\
t
\end{array}\right),
$$
It can be checked that $f_{s, t}(b,n,n-k,a)$ is a convex function.
$$
\begin{tikzpicture}
[x=1.1cm, y=0.7cm, every edge/.style={draw, postaction={decorate,decoration={markings,mark=at position 0.6 with {\arrow{>}}}}}]
       \draw[] (-1,2.5) ellipse (1.5 and 0.5);\draw[] (3,2.5) ellipse (1.8 and 0.5);
       \draw[] (-1,-0.5) ellipse (1.3 and 0.5);\draw[] (3,-0.5) ellipse (2 and 0.5);

       \draw [fill=black] (-2,2.5)  circle (0.08cm); \draw [fill=black] (0,2.5)  circle (0.08cm);
       \draw [fill=black] (2,2.5)  circle (0.08cm);  \draw [fill=black] (4,2.5)  circle (0.08cm);
       \draw [fill=black] (-2,-0.5)  circle (0.08cm); \draw [fill=black] (0,-0.5)  circle (0.08cm);
       \draw [fill=black] (2,-0.5)  circle (0.08cm);  \draw [fill=black] (4,-0.5)  circle (0.08cm);
       \node at (-3,2.5){$X$};\node at (-3,-0.5){$Y$};
        \node at (-1,3.3){$A$};\node at (3,3.3){$B$};
           \node at (-1,-1.3){$C$};\node at (3,-1.3){$D$};
           \node at (-1,2.5){$\cdots$};\node at (3,2.5){$\cdots$};
           \node at (-1,-0.5){$\cdots$};\node at (3,-0.5){$\cdots$};
           \draw[] (-2,2.5)--(-2,-0.5);
            \draw[] (0,2.5)--(0,-0.5);\draw[] (-2,2.5)--(0,-0.5);
            \draw[] (0,2.5)--(-2,-0.5);

            \draw[] (4,2.5)--(0,-0.5);
            \draw[] (4,2.5)--(-2,-0.5);\draw[] (2,2.5)--(0,-0.5);
            \draw[] (2,2.5)--(-2,-0.5);

         \draw[] (4,2.5)--(4,-0.5);
            \draw[] (4,2.5)--(2,-0.5);\draw[] (2,2.5)--(4,-0.5);
            \draw[] (2,2.5)--(2,-0.5);

            \node at (1,-2){Figure 1. $F_{b,n,n-k,a}$.};
\end{tikzpicture}
$$

Let $F_{b,n,n-k,a}$ be a bipartite graph, with colour classes $X$ and $Y$, $|X| =n\le |Y | = b$, where
$X = A\cup B, Y = C\cup D, |A| = k + a, |B| = n -k - a, |C| = a$, and $|D| = b - a$. Moreover, assume
that $N_{F_{b,n,n-k,a}}(x) = C$ for all $x \in A$, and $N_{F_{b,n,n-k,a}}
(x) = Y$ for all $x\in B$. See Figure 1.
Note that if $n\ge 2k+2a$, then $\delta(G_1)=a\ge 1$ and $$N(K_{s,t},F_{b,n,n-k,a})= \begin{cases}
f_{s, t}(b,n,n-k,a), & s = t, \\
f_{s, t}(b,n,n-k,a)+ f_{t, s}(b,n,n-k,a), & s \not= t, \end{cases}$$
but $F_{b,n,n-k,a}$ does not contain a cycle of length $2n-2k$ or more.

\begin{Theorem}\label{CB}
Let $G$ be a connected bipartite graph with bipartition $(X, Y)$.
Suppose $|X|=n\le |Y|=b,h=\lfloor\frac{n-k}{2}\rfloor$ and $\delta(G) \geq r\ge 1$, where $n\ge 2k+2r$ and $k\in \mathbb{Z}$.
$$
N(K_{s,t},G)> \begin{cases}
\max \{f_{s, t}(b,n,n-k,r),f_{s, s}(b,n,n-k,h)\}, & s = t, \\
\max \{f_{s, t}(b,n,n-k,r)+ f_{t, s}(b,n,n-k,r),\\f_{s, t}(b,n,n-k,h)+
f_{t, s}(b,n,n-k,h)\}, & s \not= t. \end{cases}
$$
then $G$ contains a cycle of length at least $2n-2k$.
\end{Theorem}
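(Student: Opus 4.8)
The plan is to prove the contrapositive: assuming that $G$ is a connected bipartite graph with parts $X,Y$ of sizes $n\le b$, with $\delta(G)\ge r$, $n\ge 2k+2r$, and \emph{no} cycle of length $\ge 2n-2k$, I will show that $N(K_{s,t},G)$ is at most the displayed maximum. Since adding an edge to $G$ never decreases $N(K_{s,t},G)$ while preserving both bipartiteness and the degree lower bound, I may first pass to a graph that is edge-maximal subject to containing no cycle of length $\ge 2n-2k$. This maximality, combined with the long-cycle-free hypothesis, is what will force the rigid structure exploited below; note that the extremal graphs $F_{b,n,n-k,a}$ are only connected (not $2$-connected) for small $a$, so the whole graph—not just one block—must be kept in play throughout the count.

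First I would control the circumference. Because $G$ is bipartite and has no cycle of length $\ge 2n-2k$, its longest cycle has even length at most $2(n-k-1)$, so any longest cycle meets each side in at most $n-k-1$ vertices. To extract structure from this, I would take a longest path and apply the rotation--extension technique underlying Jackson's theorem (Theorem \ref{Ja85}): the endpoints of a longest path send all of their neighbours back into the path, which together with the circumference ceiling and $\delta(G)\ge r$ pins down the degree profile of the two sides. The block--cut-tree is used only to locate the block carrying a longest cycle; the counting of $K_{s,t}$ copies remains global over all of $V(G)$.

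The heart of the argument is a structural lemma asserting that, after these reductions, $G$ is (up to relabelling) a spanning subgraph of $F_{b,n,n-k,a}$ for some integer $a$ with $r\le a\le h=\lfloor (n-k)/2\rfloor$: that is, $X$ splits into a part $A$ whose vertices see only a common set $C\subseteq Y$ with $|C|=a$ and a part $B$ whose vertices may see all of $Y$, where $|A|=k+a$ and $|B|=n-k-a$. The floor $a\ge r$ comes from the minimum-degree hypothesis, since the vertices of $A$ have degree at most $|C|=a$ in $G$; the ceiling $a\le h$ comes from the no-long-cycle condition together with $n\ge 2k+2r$. Granting this embedding, monotonicity of $N(K_{s,t},\cdot)$ under taking subgraphs gives $N(K_{s,t},G)\le N(K_{s,t},F_{b,n,n-k,a})$, which equals $f_{s,s}(b,n,n-k,a)$ when $s=t$ and $f_{s,t}(b,n,n-k,a)+f_{t,s}(b,n,n-k,a)$ when $s\neq t$.

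Finally I would invoke the convexity of $f_{s,t}(b,n,n-k,a)$ in the variable $a$ already recorded in the excerpt: a convex function on the integer interval $[r,h]$ attains its maximum at an endpoint, so the maximum over admissible $a$ is realized at $a=r$ or $a=h$, yielding exactly the two-term maxima in the statement. I expect the main obstacle to be the structural lemma itself—converting the rotation--extension and circumference information into the clean $A$--$B$--$C$--$D$ partition of $F_{b,n,n-k,a}$, in particular enforcing the floor $a\ge r$ and the ceiling $a\le h$ simultaneously, and handling carefully the cut vertices that necessarily appear when $a$ is small so that $G$ is only connected. By contrast, the convexity step and the passage from the embedding to the count should be routine.
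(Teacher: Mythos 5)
Your outline stands or falls with the structural lemma in your third paragraph --- that every connected, edge-maximal, $\mathcal{C}_{\ge 2n-2k}$-free bipartite graph with parts of sizes $n\le b$ and $\delta\ge r$ is, up to relabelling, a spanning subgraph of some $F_{b,n,n-k,a}$ with $r\le a\le h$ --- and that lemma is false, not merely left unproved. Take $k\ge 0$, $n\ge 2k+2r$ even, and $b\ge n+1$ with $b$ odd. Let $G_0$ be obtained from two disjoint copies of $K_{n/2,\,(b+1)/2}$, with $X$-sides $X_1,X_2$ and $Y$-sides $Y_1,Y_2$, by identifying one vertex of $Y_1$ with one vertex of $Y_2$ into a single vertex $y^{*}$. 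Then $G_0$ is connected bipartite with parts of sizes $n$ and $b$, and $\delta(G_0)=n/2\ge r$. Every cycle of $G_0$ lies inside one block, so its circumference is $n\le 2n-2k-2$, i.e.\ $G_0$ is $\mathcal{C}_{\ge 2n-2k}$-free. It is also edge-maximal: any non-edge joins, say, $x\in X_1$ to $y\in Y_2\setminus\{y^{*}\}$, and adding it closes a cycle on $2n$ vertices formed by the new edge, a $y$--$y^{*}$ path through all of $X_2$ (possible since $(b+1)/2\ge n/2+1$), and a $y^{*}$--$x$ path through all of $X_1$. Yet $G_0$ is a spanning subgraph of no $F_{b,n,n-k,a}$: every vertex of $X(G_0)$ has degree $(b+1)/2>n/2\ge h\ge a$, so it cannot be placed in $A$ (whose vertices have all their neighbours inside $C$, $|C|=a$), hence all $n$ vertices of $X$ would have to land in $B$, whereas $|B|=n-k-a<n$. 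Worse for your strategy: when $s=t=1$, $r=1$ and $n=2k+2$, this $G_0$ attains $f_{1,1}(b,n,n-k,r)=(k+1)(b+1)$ with equality (and for large $b$ this is the stated maximum), so there exist \emph{extremal} edge-maximal configurations structurally unrelated to the graphs $F_{b,n,n-k,a}$. No rotation--extension analysis can pin down a degree profile that these graphs simply do not have, so the embedding step cannot be repaired; only your monotonicity and convexity remarks survive.

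This is exactly where the paper's proof takes a different and unavoidable route: it never proves any embedding into an extremal configuration. Instead it adds a dominating vertex to each side (gaining $2$-connectivity while raising the forbidden cycle length to $2n-2k+2$), passes to the $(2n-2k+2)$-closure $G'$, and runs Kopylov's core argument. The core $H(G',h+1)$ is shown to be nonempty (Claim 1) and complete bipartite (Claim 2) using the bipartite P\'osa-type Lemma \ref{bi2we}; the parameter $n-k-l$ attached to the core is pinned between $r$ and $h$ using $\delta\ge r$ (Claim 3); and $N(K_{s,t},G)$ is bounded by charging, to each vertex deleted during the core-peeling process, the copies of $K_{s,t}$ destroyed at the moment of its deletion (Claims 1 and 4), which yields a quantity of the form $f_{s,t}(b,n,n-k,n-k-l)$ for \emph{every} such graph --- including graphs like $G_0$ that embed in no $F_{b,n,n-k,a}$. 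Only after that is convexity invoked at the endpoints $a=r$ and $a=h$. To salvage your plan you would need to replace the false structural lemma by a peeling/charging count of this kind; as written, the proposal has a genuine gap at its central step.
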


This theorem is sharp with the extremal graphs $F_{b,n,n-k,r}$ and $F_{b,n,n-k,h}$.

\begin{Theorem}\label{PB}
Let $G$ be a connected bipartite graph with bipartition $(X, Y)$.
Suppose $|X|=n\le |Y|=b,h=\lfloor\frac{n-k-1}{2}\rfloor$ and $\delta(G) \geq r\ge 1$, where $n\ge 2k+2r$ and $k\in \mathbb{Z}$. If
$$
N(K_{s,t},G)> \begin{cases}
\max \{f_{s, t}(b,n,n-k-1,r),f_{s, s}(b,n,n-k-1,h)\}, & s = t, \\
\max \{f_{s, t}(b,n,n-k-1,r)+ f_{t, s}(b,n,n-k-1,r),\\f_{s, t}(b,n,n-k-1,h)+f_{t, s}(b,n,n-k-1,h)\}, & s \not= t. \end{cases}
$$
then $G$ contains a path on $2n-2k$ vertices.
\end{Theorem}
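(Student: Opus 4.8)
The plan is to deduce Theorem~\ref{PB} from the cycle result Theorem~\ref{CB} by adding a single vertex that is complete to one side, so that a long cycle in the enlarged graph can be cut open into the desired path. First I would form $G'$ from $G$ by adjoining one new vertex $w$ to the part $Y$ and joining $w$ to every vertex of $X$. Then $G'$ is a connected bipartite graph with parts $X$ and $Y\cup\{w\}$ of sizes $n$ and $b+1$; since $\delta(G)\ge r$ forces $n\ge r$, the vertex $w$ has degree $n\ge r$, every old vertex keeps or increases its degree, and $n\ge 2(k-1)+2r$ holds, so Theorem~\ref{CB} is applicable to $G'$ with the shifted parameter $k-1$.

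The key structural point is an extraction step: any cycle $C$ in $G'$ with $|C|\ge 2n-2k+2$ yields a copy of $P_{2n-2k}$ in $G$. Indeed, if $w\notin C$ then $C$ is a cycle of $G$ on at least $2n-2k+2$ vertices and already contains $P_{2n-2k}$. If $w\in C$, then both neighbours of $w$ on $C$ lie in $X$, so deleting $w$ leaves a path of $G$ with both endpoints in $X$ on $|C|-1\ge 2n-2k+1$ vertices; its $X$-part has size $|C|/2\ge n-k$ and its $Y$-part has size $|C|/2-1\ge n-k$, so it contains a subpath on exactly $2n-2k$ vertices, namely $P_{2n-2k}$. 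Consequently it suffices to force a cycle of length at least $2n-2k+2=2n-2(k-1)$ in $G'$, which is precisely the conclusion of Theorem~\ref{CB} once $N(K_{s,t},G')$ exceeds the corresponding threshold, namely $\max\{f_{s,t}(b+1,n,n-k+1,r),f_{s,s}(b+1,n,n-k+1,h')\}$ for $s=t$ (and the analogous sum of $f_{s,t}+f_{t,s}$ for $s\ne t$), where $h'=\lfloor (n-k+1)/2\rfloor$.

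The remaining and principal task is a counting comparison. Writing $N(K_{s,t},G')=N(K_{s,t},G)+\Delta$, where $\Delta$ counts the copies of $K_{s,t}$ that use $w$, I must show that the hypothesis of Theorem~\ref{PB} on $N(K_{s,t},G)$ forces $N(K_{s,t},G')$ above the threshold of Theorem~\ref{CB} displayed above. Because $w$ is complete to $X$, the extra term $\Delta$ is a sum of complete-bipartite counts of lower order (copies of $K_{s-1,t}$ and of $K_{s,t-1}$ spanning $X$ and $Y$), and the whole comparison reduces to an inequality between the two families of threshold functions $f_{s,t}(b,n,n-k-1,\cdot)$ and $f_{s,t}(b+1,n,n-k+1,\cdot)$.

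The hard part will be exactly this threshold comparison, since $\Delta$ depends on the degree sequence of $G$ and cannot be handled by a purely formal substitution. The cleanest route I foresee is to verify the inequality first on the extremal graphs: adjoining $w$ to the path-extremal graph $F_{b,n,n-k-1,a}$ produces $F_{b+1,n,n-k,a+1}$, which is strictly sparser than the cycle-extremal graph $F_{b+1,n,n-k+1,\cdot}$ of Theorem~\ref{CB}, so the boost $\Delta$ is more than enough in the extremal case; I would then promote this to an arbitrary $G$ using the convexity of $f_{s,t}$ (noted after its definition) to locate the maximizing parameter $a\in\{r,h\}$ and to bound $\Delta$ from below in terms of the degrees of $G$. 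Finally I would dispose of the boundary regime (for instance $b=n$, where $w$ should instead be added to $X$, and the smallest admissible values of $k$) by the symmetric construction, and record that the graphs $F_{b,n,n-k-1,r}$ and $F_{b,n,n-k-1,h}$ show the bound is sharp.
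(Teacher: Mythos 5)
Your structural reduction is fine as far as it goes: adding $w$ complete to $X$ and cutting open a cycle of length at least $2n-2k+2$ in $G'=G+w$ does produce a $P_{2n-2k}$ in $G$. The fatal problem is the counting step you defer to the end — showing that the hypothesis on $N(K_{s,t},G)$ pushes $N(K_{s,t},G')$ above the threshold of Theorem~\ref{CB} for $G'$. This implication is not merely hard; it is false. Theorem~\ref{CB} applied to $G'$ (parts of sizes $n$ and $b+1$, minimum degree still only $\ge r$ since vertices of $Y$ gain no neighbours, target length $2n-2(k-1)$) needs $N(K_{s,t},G')$ to exceed $\max\{f_{s,t}(b+1,n,n-k+1,r),\,f_{s,t}(b+1,n,n-k+1,h+1)\}$ (plus symmetric terms for $s\neq t$), and the increase of the threshold dwarfs the gain $\Delta$. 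Already for $s=t=1$ one has $\Delta=d_{G'}(w)=n$, whereas
$$
f_{1,1}(b+1,n,n-k+1,r)-f_{1,1}(b,n,n-k-1,r)=2b+n-k+1-3r>n,\qquad
f_{1,1}(b+1,n,n-k+1,h+1)-f_{1,1}(b,n,n-k-1,h)=b+n-h>n,
$$
using $b\ge n\ge 2k+2r$ (and $k\ge 0$). So a graph $G$ whose edge count is exactly one above the Theorem~\ref{PB} threshold yields a $G'$ strictly \emph{below} the Theorem~\ref{CB} threshold, and the cycle theorem gives nothing. Your own extremal-graph check, read correctly, exhibits exactly this failure: $F_{b,n,n-k-1,a}+w=F_{b+1,n,n-k,a+1}$ being ``strictly sparser than the cycle-extremal graph $F_{b+1,n,n-k+1,\cdot}$'' means its $K_{s,t}$-count lies \emph{below} the threshold you must exceed; you interpret this as the boost $\Delta$ being ``more than enough,'' which is the reverse of what it says. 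Since $F_{b+1,n,n-k,a+1}$ is even $\mathcal{C}_{\ge 2n-2k}$-free — a stronger property than the $\mathcal{C}_{\ge 2n-2k+2}$-freeness you are trying to violate — this gap is structural, and no sharpening of the lower bound on $\Delta$ in terms of degrees or convexity can close it.

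The repair is to drop the augmentation entirely and apply Theorem~\ref{CB} to $G$ itself in contrapositive (Tur\'an upper bound) form, which is what the paper does. If $G$ has no $P_{2n-2k}$, then $G$ has no cycle of length $\ge 2n-2k$; moreover $G$ has no cycle of length exactly $2n-2k-2$ either, because such a cycle meets $X$ in only $n-k-1<n$ vertices, and connectedness would let one attach an outside vertex and obtain a path on $2n-2k$ vertices. Hence $G$ is $\mathcal{C}_{\ge 2n-2k-2}$-free, i.e.\ $\mathcal{C}_{\ge 2n-2(k+1)}$-free, and Theorem~\ref{CB} with $k+1$ in place of $k$ — whose bound is exactly $\max\{f_{s,t}(b,n,n-k-1,r),f_{s,t}(b,n,n-k-1,h)\}$ with $h=\lfloor(n-k-1)/2\rfloor$, the very quantity in the hypothesis of Theorem~\ref{PB} — bounds $N(K_{s,t},G)$ from above, contradicting that hypothesis. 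No comparison between two different threshold families, and no estimate of any $\Delta$, is ever needed.
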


The following result is a direct corollary.

\begin{Corollary}\label{MB}
Let $G$ be a connected bipartite graph with bipartition $(X, Y)$.
Suppose $|X|=n\le |Y|=b,h=\lfloor\frac{n-k-1}{2}\rfloor$ and $\delta(G) \geq r\ge 1$, where $n\ge 2k+2r$ and $k\in \mathbb{Z}$. If
$$
N(K_{s,t},G)> \begin{cases}
\max \{f_{s, t}(b,n,n-k-1,r),f_{s, s}(b,n,n-k-1,h)\}, & s = t, \\
\max \{f_{s, t}(b,n,n-k-1,r)+ f_{t, s}(b,n,n-k-1,r),\\f_{s, t}(b,n,n-k-1,h)+f_{t, s}(b,n,n-k-1,h)\}, & s \not= t. \end{cases}
$$
then $G$ contains a matching with $n-k$
edges.
\end{Corollary}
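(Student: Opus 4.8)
The plan is to derive this statement directly from Theorem \ref{PB}, whose hypotheses are word-for-word identical to those of the corollary; this is exactly why it is advertised as a direct corollary. First I would invoke Theorem \ref{PB} to conclude that $G$ contains a path on $2n-2k$ vertices, say with vertex sequence $v_1v_2\cdots v_{2n-2k}$.

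The key---and essentially only---observation is that a path on an even number of vertices carries a matching saturating all of them. Since $2n-2k=2(n-k)$ is even, the $n-k$ edges $v_1v_2,\ v_3v_4,\ \ldots,\ v_{2n-2k-1}v_{2n-2k}$ are pairwise vertex-disjoint and hence form a matching of size exactly $n-k$ inside the path. As the path is a subgraph of $G$, this matching lies in $G$, so $G$ contains $M_{n-k}$, as required.

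There is no genuine obstacle here: the entire substance is already packed into Theorem \ref{PB}, and the passage from a long path to a large matching is the standard alternating-edge selection. The one point worth flagging is parity, namely that $2n-2k$ is even so that the alternating edges exhaust the path and deliver precisely $n-k$ independent edges rather than one fewer; this is immediate from $2n-2k=2(n-k)$. Consequently the proof amounts to a single application of Theorem \ref{PB} followed by this elementary extraction, and no new extremal analysis of $N(K_{s,t},G)$ is needed beyond what that theorem already supplies.
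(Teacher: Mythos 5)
Your proposal is correct and matches the paper's intent exactly: the paper gives no separate proof, merely noting the statement is ``a direct corollary'' of Theorem \ref{PB}, and the intended derivation is precisely yours---apply Theorem \ref{PB} to obtain a path on $2n-2k$ vertices and take its $n-k$ alternating edges as the matching. Your parity remark ($2n-2k = 2(n-k)$) is the only detail worth stating, and you state it.
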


Theorem \ref{PB} and Corollary \ref{MB} are sharp with the extremal graphs $F_{b,n,n-k-1,r}$ and $F_{b,n,n-k-1,h}$.

For $n \geqslant k \geqslant 4$ and $1\le r <\frac{k}{2} $, let
$$
g_{s, t}(n,k,a)= \begin{cases}\sum\limits_{i=1}\limits^{n-k+a}\left(\begin{array}{c}
a \\
s
\end{array}\right)\left(\begin{array}{c}
n-s-i \\
s-1
\end{array}\right)+\frac{1}{2}\left(\begin{array}{c}
k-a \\
2 s
\end{array}\right)\left(\begin{array}{c}
2 s \\
s
\end{array}\right),&s=t; \\
\sum\limits_{i=1}\limits^{n-k+a}\left(\left(\begin{array}{c}
a \\
s
\end{array}\right)\left(\begin{array}{c}
n-s-i \\
t-1
\end{array}\right)+\left(\begin{array}{c}
a \\
t
\end{array}\right)\left(\begin{array}{c}
n-t-i \\
s-1
\end{array}\right)\right)+\\\left(\begin{array}{c}
k-a \\
s+t
\end{array}\right)\left(\begin{array}{c}
s+t \\
s
\end{array}\right),&s \neq t.\end{cases}
$$

It can be checked that $g_{s, t}(n, k, a)$ is a convex function.

For $n \geqslant k \geqslant 4$ and $k / 2>a \geqslant 1$, define the $n$-vertex graph $H_{n, k, a}$ as follows. The vertex set of $H_{n, k, a}$ is partitioned into three sets $A, B, C$ such that $|A|=a,|B|=n-k+a$ and $|C|=k-2 a$ and the edge set of $H_{n, k, a}$ consists of all edges between $A$ and $B$ together with all edges in $A \cup C$
(see Figure 2).
Note that when $a \geqslant 2, H_{n, k, a}$ is 2-connected, has no cycle of length $k$ or more.

$$
\begin{tikzpicture}
[x=1cm, y=0.5cm, every edge/.style={draw, postaction={decorate,decoration={markings,mark=at position 0.6 with {\arrow{>}}}}}]
       \draw[] (-1,2.5) ellipse (1.3 and 4);\draw[fill=gray] (3,2.5) ellipse (1.2 and 4);\draw[fill=gray] (2.4,2.5) ellipse (0.5 and 2);
        \node at (-1,7){$B$}; \node at (3,7){$K_{k-a}$};\node at (2.4,4.9){$A$};\node at (3.4,4.9){$C$};

        \vertex (1) at (-1,5.4)[label=left:$b_1$,fill=black] {};
        \vertex (2) at (-1,4.4) [label=left:$b_2$,fill=black]{};
        \node at (-1,3.1){$\vdots$};
        \node at (-1,2.3){$\vdots$};
        \vertex (3) at (-1,0.8)[fill=black] {};
        \vertex (4) at (-1,-0.2)[fill=black] {};
         \node at (-1,-0.5){$b_{n-k+a}$};
         \vertex (5) at (-1,0.8)[fill=black] {};
        \vertex (1') at (2.4,3.8)[fill=black] {}; \node at (2.4,3.15){$\vdots$};\node at (2.4,2.35){$\vdots$};  \vertex (2') at (2.4,1.4)[fill=black] {};
        \draw[] (2.4,3.8)--(-1,5.4);\draw[] (2.4,3.8)--(-1,4.4);\draw[] (2.4,3.8)--(-1,0.8);\draw[] (2.4,3.8)--(-1,-0.2);
         \draw[] (2.4,1.4)--(-1,5.4);\draw[] (2.4,1.4)--(-1,4.4);\draw[] (2.4,1.4)--(-1,0.8);\draw[] (2.4,1.4)--(-1,-0.2);
       \node at (0.9,-2.6){Figure 2. $H_{n, k, a}$.};
\end{tikzpicture}
$$

For $B \subset V\left(H_{n, k, a}\right)$, let $B=\left\{b_1, b_2, \cdots, b_{n-k+a}\right\}$.
Note that for $i \in[n-k+a]$, the number of copies of $K_{s, t}$ containing $b_i$ and not
containing $b_1, \cdots, b_{i-1}$ is $\left(\begin{array}{c}a \\ s\end{array}\right)\left(\begin{array}{c}n-s-i \\ s-1\end{array}\right)$ when $s=t$ and $\left(\begin{array}{c}a \\ s\end{array}\right)\left(\begin{array}{c}n-s-i \\ t-1\end{array}\right)+\left(\begin{array}{c}a \\ t\end{array}\right)\left(\begin{array}{c}n-t-i \\ s-1\end{array}\right)$ when $s \neq t$. Hence, the number of copies of $K_{s, t}$ in $H_{n, k, a}$ is $g_{s, t}(n, k, a)$.

\begin{Theorem}\label{C}

 Let $G$ be a 2-connected graph on n vertices, with $\delta(G) \geq r\ge 2$, where
$n \geqslant k \geqslant 5$ and $h=\lfloor\frac{k-1}{2}\rfloor$. If
$$
N(K_{s,t},G)>\max \left\{g_{s, t}(n,k,r), g_{s, t}(n,k,h)\right\},
$$
then $G$ contains a cycle of length at least $k$.
\end{Theorem}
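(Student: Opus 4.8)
The plan is to establish the contrapositive. Assume $G$ is $2$-connected on $n$ vertices with $\delta(G)\ge r\ge 2$ and that $G$ has no cycle of length at least $k$, i.e. its circumference satisfies $c(G)\le k-1$; I will prove $N(K_{s,t},G)\le\max\{g_{s,t}(n,k,r),g_{s,t}(n,k,h)\}$, which is exactly the statement in contrapositive form. The overall strategy combines the clique/biclique-counting technique of Luo~\cite{Luo17} and Lu--Yuan--Zhang~\cite{LYZ21} for graphs of bounded circumference with the minimum-degree bookkeeping that Ning and Peng~\cite{NP20} added to Kopylov's method.

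The engine is a Kopylov-type \emph{disintegration}~\cite{Ko77}. For a threshold $a$, repeatedly delete a vertex whose degree in the current graph is at most $a$, recording the deletion order as $b_1,b_2,\dots$, until no such vertex remains; call the surviving set the \emph{core} $R$. The heart of the structural step is Kopylov's circumference lemma: since $G$ is $2$-connected with $c(G)\le k-1$, there is an integer $a\le h=\lfloor(k-1)/2\rfloor$ for which this process leaves a core $R$ that is (essentially) a clique on at most $k-a$ vertices, so that exactly $n-k+a$ vertices are deleted. The minimum-degree hypothesis enters at the other end: because $\delta(G)\ge r$, no vertex can ever be removed at a threshold below $r$, which forces the relevant threshold to satisfy $a\ge r$. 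Thus one obtains an integer $a$ with $r\le a\le h$ governing the structure of $G$.

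With this in hand the count is a telescoping sum exactly as previewed before the statement. Consider first $s=t$. For each $i$, a copy of $K_{s,s}$ that contains $b_i$ but none of $b_1,\dots,b_{i-1}$ must have the side not containing $b_i$ inside $N(b_i)$, hence inside the at most $a$ current neighbours of $b_i$ (the factor $\binom{a}{s}$), while the $s-1$ further vertices on $b_i$'s side are chosen among the $n-s-i$ vertices still available (the factor $\binom{n-s-i}{s-1}$); copies lying wholly inside the clique core contribute $\frac{1}{2}\binom{k-a}{2s}\binom{2s}{s}$. Summing over $i\in[n-k+a]$ gives $N(K_{s,s},G)\le g_{s,s}(n,k,a)$. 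When $s\ne t$ one repeats this for the two choices of which side of $K_{s,t}$ contains $b_i$, producing exactly the two summands $\binom{a}{s}\binom{n-s-i}{t-1}$ and $\binom{a}{t}\binom{n-t-i}{s-1}$ together with the core term $\binom{k-a}{s+t}\binom{s+t}{s}$, i.e. $N(K_{s,t},G)\le g_{s,t}(n,k,a)$. Finally, since $g_{s,t}(n,k,\cdot)$ is convex, its maximum over the integer interval $r\le a\le h$ is attained at an endpoint, so $N(K_{s,t},G)\le\max\{g_{s,t}(n,k,r),g_{s,t}(n,k,h)\}$, completing the proof.

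The main obstacle is the structural step, not the counting. Converting the qualitative picture of Kopylov's lemma into the precise statement ``there is $a$ with $r\le a\le h$ whose $a$-disintegration leaves a clique core of at most $k-a$ vertices'' is where $2$-connectivity and the circumference bound do all the work, and it is the part I would have to adapt carefully rather than merely quote, because I must track the degree threshold $a$ (to extract the factor $\binom{a}{s}$) and not only the number of edges, as in Kopylov's original application. A secondary technical point is to verify the boundary and degenerate ranges---when $k-2a$ is $0$ or the core is too small to host a $K_{s,t}$---where the relevant binomial coefficients vanish automatically so that no separate argument is needed.
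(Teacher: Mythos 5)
Your outline does follow the same general strategy as the paper---Kopylov-style disintegration, per-vertex counting of $K_{s,t}$-copies, and convexity of $g_{s,t}(n,k,\cdot)$ over $[r,h]$---and your counting paragraph reproduces the paper's computation correctly. But the proof has a genuine gap exactly where you flag one: the structural step you propose to ``adapt carefully rather than merely quote'' is the entire content of the proof, and moreover the statement you plan to adapt is false as written. First, it needs edge-maximality, which you never invoke: one must first embed $G$ in a graph that is saturated with respect to having no cycle of length at least $k$ (this only increases $N(K_{s,t},G)$). Second, even for saturated graphs the claim fails. Take $n=k=5$, $r=2$, $h=2$, and let $G$ be $K_{2,3}$ plus the edge joining the two vertices of degree $3$. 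This graph is $2$-connected, has circumference $4<k$, and adding any missing edge creates a $C_5$, yet its $h$-core is empty (the three degree-$2$ vertices go first, then the remaining two), while the disintegration at any threshold $a<r$ deletes nothing. So there is \emph{no} $a\le h$ whose disintegration leaves a clique core on $k-a$ vertices with exactly $n-k+a$ deletions; and the fallback of ``just keep deleting'' does not rescue the count, since with an empty core the template bound $\sum_{i=1}^{n}\binom{h}{s}\binom{n-s-i}{s-1}$ gives $8$ in this example while $g_{1,1}(5,5,2)=7$. The degenerate case is not absorbed by vanishing binomial coefficients, contrary to your closing remark.

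What is actually true, and what the paper proves, is a dichotomy in which counting and structure are interleaved and cannot be factored into ``structural lemma, then count.'' After passing to an edge-maximal counterexample, the paper shows: (Claim 1, quoted from \cite{LYZ21}) if $H(G,h)$ is empty then $N(K_{s,t},G)\le g_{s,t}(n,k,h)$---this branch is itself a counting argument with a careful estimate for the last few deleted vertices, not a structural fact; (Claim 2) otherwise $H(G,h)$ is a clique; (Claim 3) with $\ell=|V(H(G,h))|$, the minimum degree forces $r\le k-\ell\le h$, proved by applying P\'osa's lemma (Lemma \ref{posa}) to a longest path between a vertex outside the core and a nonadjacent vertex inside it---your argument that ``no vertex can ever be removed at a threshold below $r$'' says nothing by itself about the size of $H(G,h)$, and could only be turned into a proof of $k-\ell\ge r$ after Claims 1, 2 and 5 are already in place; (Claim 4) if $H(G,h)=H(G,k-\ell)$, the count is at most $g_{s,t}(n,k,k-\ell)\le\max\{g_{s,t}(n,k,r),g_{s,t}(n,k,h)\}$, which is your counting step; and (Claim 5, quoted from \cite{LYZ21}) if instead $H(G,h)\subsetneq H(G,k-\ell)$, then P\'osa's lemma produces a cycle of length at least $k$. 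Your proposal supplies only Claim 4 and the convexity endpoint argument; Claims 1, 2, 3 and 5---the empty-core case, the clique structure, the minimum-degree bound, and the two-core comparison that legitimizes the phrase ``clique core on exactly $k-a$ vertices''---are all missing, and the first and third are mis-stated in your plan.
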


We have sharpness examples $H_{n, k, r}$ and $H_{n, k, h}$.

\begin{Theorem}\label{P}

 Let $G$ be a connected graph on n vertices, with $\delta(G) \geq r\ge 1$, where
$n \geqslant k \geqslant 4$ and $h=\lfloor\frac{k-2}{2}\rfloor$. If
$$
N(K_{s,t},G)>\max \left\{g_{s, t}(n,k-1,r), g_{s, t}(n,k-1,h)\right\},
$$
then $G$ contains a path on $k$ vertices.
\end{Theorem}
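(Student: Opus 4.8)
The plan is to reduce the path problem for the connected graph $G$ to the long-cycle problem for a $2$-connected graph, and then invoke Theorem~\ref{C}. Let $u$ be a new vertex and set $G^{+}=G\vee\{u\}$, the graph obtained from $G$ by adding $u$ together with all edges $uv$, $v\in V(G)$. Since $G$ is connected and $u$ is adjacent to everything, $G^{+}$ is $2$-connected; moreover $d_{G^{+}}(v)=d_{G}(v)+1\ge r+1$ for $v\in V(G)$ and $d_{G^{+}}(u)=n\ge r+1$, so $\delta(G^{+})\ge r+1\ge 2$. The first genuine step is the path--cycle correspondence: I claim $G$ contains a path on $k$ vertices if and only if $G^{+}$ contains a cycle of length at least $k+1$. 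Indeed, a $P_{k}$ $v_{1}\cdots v_{k}$ in $G$ closes up to the cycle $v_{1}\cdots v_{k}uv_{1}$ of length $k+1$; conversely, a cycle of length at least $k+1$ in $G^{+}$ either avoids $u$ (and then lies in $G$, giving a path on at least $k+1$ vertices) or passes through $u$ (and deleting $u$ leaves a path on at least $k$ vertices in $G$). Thus, assuming for contradiction that $G$ has no $P_{k}$, the graph $G^{+}$ is $\mathcal{C}_{\ge k+1}$-free. The parameters line up exactly with the hypotheses of Theorem~\ref{C}: writing $\tilde n=n+1$, $\tilde k=k+1$, $\tilde r=r+1$ we have $\tilde n\ge \tilde k\ge 5$ (as $n\ge k\ge 4$) and $\tilde r\ge 2$, and the floor becomes $\tilde h=\lfloor(\tilde k-1)/2\rfloor=\lfloor k/2\rfloor=h+1$.

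Second, I would record the structural identity that makes the two families of extremal graphs correspond, namely
$$H_{n+1,\,k+1,\,a+1}\;\cong\;H_{n,\,k-1,\,a}\vee\{u\}.$$
This is checked directly from the definition: writing $A,B,C$ for the parts of $H_{n,k-1,a}$, the apex $u$ is adjacent to all of $A\cup B\cup C$, hence plays exactly the role of an extra vertex of the ``$A$''-class of $H_{n+1,k+1,a+1}$, and one verifies $|A|+1=a+1$, $|B|=n-k+a+1$, and $|C|=(k-1)-2a=(k+1)-2(a+1)$ are the correct part sizes. Consequently, applying the contrapositive of Theorem~\ref{C} to $G^{+}$ bounds $N(K_{s,t},G^{+})$ by the maximum of $g_{s,t}(n+1,k+1,r+1)$ and $g_{s,t}(n+1,k+1,\lfloor k/2\rfloor)$, and under the above identity these two numbers equal $g_{s,t}(n,k-1,r)$ and $g_{s,t}(n,k-1,h)$ plus the number of copies of $K_{s,t}$ through the apex in the respective extremal graph.

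The last step is to pass from the bound on $N(K_{s,t},G^{+})$ to the desired bound on $N(K_{s,t},G)$, and this is where the main difficulty lies. Partition the copies of $K_{s,t}$ in $G^{+}$ into those avoiding $u$, which are exactly the $N(K_{s,t},G)$ copies we want to bound, and those through $u$; call the latter count $U_{G}$. For edges ($s=t=1$) one has $U_{G}=n$ for \emph{every} $G$, so subtracting the constant apex contribution turns the cycle bound for $G^{+}$ into the path bound for $G$ verbatim --- this is the classical Kopylov/Erd\H{o}s--Gallai reduction. For $s+t\ge 3$, however, $U_{G}$ is genuinely structure-dependent: a copy of $K_{s,t}$ through $u$ restricts to a copy of $K_{s-1,t}$ or $K_{s,t-1}$ in $G$, so $U_{G}$ encodes smaller biclique counts of $G$ and is in general strictly smaller than the corresponding apex contribution in the extremal graph. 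Hence Theorem~\ref{C} cannot be applied as a black box. The resolution I would pursue is to re-run the disintegration/longest-cycle counting argument underlying Theorem~\ref{C} on $G^{+}$ while \emph{only} tallying the copies of $K_{s,t}$ that avoid $u$; equivalently, one proves the sharper statement that the number of $u$-free copies of $K_{s,t}$ in such a $2$-connected $\mathcal{C}_{\ge k+1}$-free graph with an apex is at most $\max\{g_{s,t}(n,k-1,r),g_{s,t}(n,k-1,h)\}$. Controlling this apex contribution --- rather than merely setting up the reduction --- is the crux of the proof.
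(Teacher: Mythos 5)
Your reduction is set up correctly, and your diagnosis of the difficulty is also correct --- but the proposal stops exactly where the proof has to begin. The verifiable parts are all fine: $G^{+}=G\vee K_1$ is $2$-connected with $\delta(G^{+})\ge r+1\ge 2$; a $P_k$ in $G$ exists if and only if $G^{+}$ has a cycle of length at least $k+1$; the parameter shift $(n+1,k+1,r+1)$ with $\lfloor k/2\rfloor=h+1$ meets the hypotheses of Theorem \ref{C}; and $H_{n+1,k+1,a+1}\cong H_{n,k-1,a}\vee K_1$ holds, so the extremal graphs correspond. You are also right that for $s+t\ge 3$ Theorem \ref{C} cannot be invoked as a black box, because the apex contribution $U_G$ (copies of $K_{s-1,t}$ or $K_{s,t-1}$ in $G$) is structure-dependent, so subtracting it from $\max\{g_{s,t}(n+1,k+1,r+1),g_{s,t}(n+1,k+1,h+1)\}$ does not yield the claimed bound. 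The gap is that the ``sharper statement'' you then defer to --- that in a $2$-connected $\mathcal{C}_{\ge k+1}$-free graph formed by $G$ plus a dominating vertex $u$, the number of copies of $K_{s,t}$ avoiding $u$ is at most $\max\{g_{s,t}(n,k-1,r),g_{s,t}(n,k-1,h)\}$ --- is not an available lemma: by your own equivalence it is precisely Theorem \ref{P} restated in apex form. None of the actual content (the analogues of Claims 1--5 in the proof of Theorem \ref{C}: nonemptiness of the core $H(G^{+},\lfloor k/2\rfloor)$, the core being a clique, the minimum-degree bound $r+1\le (k+1)-\ell\le\lfloor k/2\rfloor$, the counting of apex-free copies in Claim 4, and the final long-cycle contradiction) is carried out, so what you have is a correct reduction plus a correct statement of what remains to be proved, i.e.\ an outline rather than a proof.

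For what it is worth, the route you sketch is viable and is in fact the technique this paper itself uses --- not in its (entirely omitted) proof of Theorem \ref{P}, which is dismissed as ``similar to Theorem \ref{C}'', but in the proof of Theorem \ref{CB}: there the authors add dominating vertices $x,y$, restrict all counting to $G^{*}=G'-\{x,y\}$, and exploit the fact that every vertex deleted during disintegration has a dominating vertex among its few allowed neighbors, hence one fewer neighbor in the original graph. That same bookkeeping is what would complete your argument: each vertex deleted from $G^{+}$ has at most $\lfloor k/2\rfloor$ neighbors of which one is always $u$, hence at most $h=\lfloor(k-2)/2\rfloor$ neighbors in $G$, which produces the summand $\binom{a}{s}\binom{n-s-i}{s-1}$ of $g_{s,t}(n,k-1,\cdot)$ rather than its $(n+1,k+1)$ analogue, and the terminal clique (which contains $u$) contributes the second summand of $g_{s,t}(n,k-1,\cdot)$ once $u$ is discarded. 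Writing out those claims is essentially the full proof of Theorem \ref{C} over again with apex-aware counting; until that is done, the crux you correctly identified remains unproved.
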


This theorem is sharp with the extremal graphs $H_{n, k-1, r}$ and $H_{n, k-1, h}$.
\section{Proofs of Main Results}


P\'{o}sa proved the following result in \cite{Posa62}, which is used to ensure
a cycle of length at least $c$, for any integer $c\ge 3$.

\begin{Lemma}\label{posa} (P\'{o}sa \cite{Posa62})
Let $G$ be a 2-connected $n$-vertex graph and $P$ be a path on $m$
vertices with endpoints $x$ and $y$. Then $G$
contains a cycle of length at least min$\{m, d_P (x) + d_P (y)\}$.
\end{Lemma}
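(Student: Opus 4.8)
The plan is to fix a traversal $P = v_1 v_2 \cdots v_m$ with $v_1 = x$ and $v_m = y$, write $a = d_P(x)$ and $b = d_P(y)$, and produce a cycle of length at least $\min\{m, a+b\}$ by closing up $P$ across carefully chosen chords at its two ends. First I would dispose of the trivial closure: if $xy \in E(G)$ then $v_1 v_2 \cdots v_m v_1$ is a cycle of length $m \ge \min\{m, a+b\}$, so I may assume $xy \notin E(G)$. The engine of the argument is the crossing observation: set $S = \{ i \in [m-1] : v_1 v_{i+1} \in E(G)\}$ and $T = \{ i \in [m-1] : v_i v_m \in E(G)\}$, so that $|S| = a$ and $|T| = b$ (every neighbour of $x$ lies in $\{v_2, \dots, v_m\}$ and every neighbour of $y$ in $\{v_1, \dots, v_{m-1}\}$). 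If some $i \in S \cap T$, then $v_1 v_2 \cdots v_i v_m v_{m-1} \cdots v_{i+1} v_1$ is a cycle of length exactly $m$, so I may further assume $S \cap T = \emptyset$. In that case $a + b = |S \cup T| \le m-1$, hence $\min\{m, a+b\} = a+b$, and the whole problem reduces to exhibiting a cycle of length at least $a + b$ when the two neighbourhoods do not cross.

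Next I would handle the non-crossing case under the sub-hypothesis that the two neighbourhoods still \emph{interleave}, meaning $\min T \le \max S$. Writing $U = [m-1] \setminus (S \cup T)$ for the unused indices, one has $|U| = m - 1 - a - b$. I would take $\tau$ to be the largest element of $T$ admitting some $s \in S$ with $s \ge \tau$, and then let $s$ be the smallest element of $S$ with $s \ge \tau$; by maximality of $\tau$ and minimality of $s$ the open interval between $\tau$ and $s$ meets neither $S$ nor $T$, so it consists of $U$-indices only, giving $s - \tau \le |U| + 1$. The two-chord cycle $v_1 v_2 \cdots v_{\tau} v_m v_{m-1} \cdots v_{s+1} v_1$ (valid since $\tau \le s$, using the chords $v_\tau v_m$ and $v_1 v_{s+1}$) then has length $m - (s - \tau) \ge m - |U| - 1 = a + b$, exactly as required.

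The remaining, and genuinely hardest, case is the \emph{separated} one, $\max S < \min T$, equivalently all neighbours of $x$ precede all neighbours of $y$ along $P$. Here $P$ splits into a left block $L$ carrying $x$ and $N_P(x)$ and a right block $R$ carrying $y$ and $N_P(y)$, joined along $P$ only by the middle subpath; a bare path with clustered chords (which is \emph{not} $2$-connected) has longest cycle only $\max\{a,b\}+1$, typically below $a+b$, so no long cycle can be forced from $P$ alone. This is precisely where $2$-connectivity enters: since $G$ has no cut vertex, the middle subpath cannot be the unique $L$--$R$ connection, so there is a second, internally disjoint $L$--$R$ path (a consequence of Menger's theorem, or of Dirac's fan lemma). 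I expect the main obstacle to be quantitative rather than qualitative: one must splice this extra connection together with the two end-fans into a single cycle whose length is still at least $a + b$, and the two disjoint connectors alone only guarantee roughly half of that. The cleanest route I can see is to use the extra connection to reroute $P$ into a new $x$--$y$ path $P'$ on a vertex set containing $N_P(x) \cup N_P(y)$ for which $d_{P'}(x) + d_{P'}(y) \ge a + b$ and whose end-neighbourhoods now interleave, thereby reducing the separated case to the interleaving case already settled; arranging the rerouting so that it preserves the neighbour count is the delicate point.
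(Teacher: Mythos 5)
Your first two cases are correct and efficiently done: the crossing cycle through a common index $i \in S\cap T$ of length $m$, and, in the interleaving case $\min T \le \max S$, the two-chord cycle $v_1\cdots v_\tau v_m v_{m-1}\cdots v_{s+1}v_1$ of length $m-(s-\tau)\ge a+b$, with the gap $s-\tau$ controlled because the open interval $(\tau,s)$ avoids $S\cup T$ by your extremal choice of $\tau$ and $s$. The problem is the separated case $\max S < \min T$ — the only case where $2$-connectivity is used, i.e.\ the actual heart of Pósa's lemma — and there your proposal stops at a stated hope rather than a proof. You ask for a rerouted $x$--$y$ path $P'$ whose vertex set still contains $N_P(x)\cup N_P(y)$, with $d_{P'}(x)+d_{P'}(y)\ge a+b$ and with interleaving end-neighbourhoods, but you give no construction, and you yourself flag the preservation of the neighbour count as "the delicate point." It is not merely delicate: a single extra ear (which is all Menger directly hands you) replaces a segment of $P$, and the resulting path can drop end-neighbours lying on the replaced segment, or remain separated at its ends; there is no evident way to force both properties simultaneously with one rerouting. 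So the case that the lemma is really about is left open.

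The paper itself does not reprove Pósa's lemma (it only cites P\'osa), but its proof of the bipartite analogue, Lemma \ref{bi2we}, shows exactly how your missing case is handled, via Jackson's covering lemma (Lemma \ref{g-bi}): one does not reroute into a single path, but takes two internally disjoint $x$--$y$ paths $P_1,P_2$ whose union is a cycle $C$ containing $N_P(x)\cup N_P(y)$, with the common vertices of $P_i$ and $P$ occurring in the same order as on $P$. One then counts on $C$: take $\{x\}\cup N_P(x)$ together with the set $T^*$ of successors along $P_1$ or $P_2$ of the neighbours of $y$; internal disjointness gives $|T^*|\ge b-1$ (at most one collision, at the endpoint), and order preservation combined with $\max S<\min T$ shows $T^*$ is disjoint from $\{x\}\cup N_P(x)$, whence $|V(C)|\ge 1+a+(b-1)=a+b$. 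This successor-set device is precisely the $T^*$ argument in the paper's proof of Lemma \ref{bi2we}; substituting it for your final reduction closes the gap, while your crossing and interleaving cases can stand as written.
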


To prove our main results about bipartite graphs, we study bipartite analogue of P\'{o}sa's lemma.
First we need the following lemma.

\begin{Lemma}\label{g-bi} (Jeckson \cite{Ja81})
Let $u$ and $v$ be distinct vertices of a 2-connected graph $G$. Let $P$ be a $uv$-path in $G$ and put
$
T=\left\{w \in N_{G-P}(\{u, v\} \mid N(w) \subseteq V(P)\} .\right.
$
Then there exist internally disjoint uv-paths $P_1$ and $P_2$ such that\\
(a) for $i=1$ and 2, the common vertices of $P_i$ and $P$ occur in the same order in both paths, and\\
(b) $N_P(T \cup\{u, v\}) \subseteq V\left(P_1\right) \cup V\left(P_2\right)$.
\end{Lemma}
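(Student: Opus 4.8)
The plan is to prove the lemma by an extremal argument over pairs of internally disjoint $uv$-paths, combined with a rerouting (augmenting) step that exploits the special structure of the set $T$ and of the endpoints $u,v$. First I would record the easy existence input: since $G$ is 2-connected, Menger's theorem gives two internally disjoint $uv$-paths, so the family $\mathcal{P}$ of all pairs $(P_1,P_2)$ of internally disjoint $uv$-paths is nonempty. I would then restrict attention to pairs satisfying the order condition (a): whenever a path $P_i$ meets $P$ in vertices appearing in the ``wrong'' order, one replaces the offending $P_i$-segment between two consecutive common vertices by the corresponding sub-path of $P$, which only shrinks the private (off-$P$) part and preserves internal disjointness after a shortcut. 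Among all order-preserving pairs I would choose one maximizing the number of vertices of $S := N_P(T\cup\{u,v\})$ covered by $V(P_1)\cup V(P_2)$, breaking ties by maximizing $|V(P_1)\cup V(P_2)|$.

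The heart of the argument is to show that this extremal pair already covers all of $S$, i.e. $S\subseteq V(P_1)\cup V(P_2)$, which is exactly conclusion (b). Assume for contradiction that some $s\in S$, say $s=v_a$ on $P$, is uncovered. Because $s\in N_P(T\cup\{u,v\})$, it is a neighbour either of an endpoint ($u$ or $v$) or of some $w\in T$ whose entire neighbourhood lies on $P$. I would use this witness to thread $s$ onto one of the two paths: if $s\in N_P(u)$ one reroutes the start of $P_1$ through the edge $us$; if $s\in N(w)$ for $w\in T$ one uses $w$ as an off-path ``bridge'' of the form $v_i\,w\,s$ to reach $s$ while jumping over vertices already claimed by the other path. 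Throughout, 2-connectivity supplies the alternative local routes needed to reconnect monotonically to $v$, and the fact that every $w\in T$ attaches only to $P$ keeps these detours from interfering with the interior of the other path. A successful reroute increases the covered part of $S$ (or the tie-breaking quantity) without destroying internal disjointness or order-preservation, contradicting the extremal choice.

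The hard part will be the rerouting step: one must show that an uncovered $s$ can always be inserted while simultaneously (i) keeping $P_1,P_2$ internally disjoint, (ii) maintaining the monotone order along $P$, and (iii) strictly improving the extremal quantity. This forces a case analysis according to where $s$ sits relative to the two paths and whether its witness is $u$, $v$, or a $T$-vertex; the delicate situation is when both immediate $P$-neighbours $v_{a-1},v_{a+1}$ of $s$ are already used by the two paths, where one genuinely needs an off-path detour (through a $T$-vertex or through the 2-connected ``rest'' of $G$) to access $s$ without re-using an interior vertex. Handling this cleanly, and checking that the shortcutting used to restore condition (a) never undoes the coverage just gained, is where the real work lies; the structural restriction $N(w)\subseteq V(P)$ for $w\in T$ is precisely what makes these detours available and controllable.
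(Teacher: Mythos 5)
A point of reference first: the paper does not prove this lemma at all --- it is stated with a citation to Jackson's 1981 paper \cite{Ja81} and used as a black box to derive the bipartite analogue of P\'osa's lemma (Lemma 3.3 of the paper). So there is no in-paper proof to compare yours against; your proposal has to stand or fall on its own, and as written it falls short in two concrete places.

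First, your order-restoration step is already flawed. When the common vertices of $P_i$ and $P$ occur in the wrong order, you propose to replace the offending $P_i$-segment between two consecutive common vertices by the corresponding sub-path of $P$, claiming this ``only shrinks the private (off-$P$) part and preserves internal disjointness.'' It need not: the sub-path of $P$ between those two vertices can pass through interior vertices of the other path $P_j$, or through other vertices of $P_i$ itself, so the replacement can destroy internal disjointness (and it adds $P$-vertices rather than only removing off-$P$ ones). Second, and more seriously, the entire content of the lemma sits in the rerouting/augmentation step, which you explicitly defer (``the hard part will be the rerouting step \ldots where the real work lies''). The obstructions you would need to overcome are genuine: (i) inserting an uncovered $s\in S$ into $P_1$ means replacing a segment of $P_1$, and that discarded segment may itself contain vertices of $S$, so neither your primary quantity (number of covered $S$-vertices) nor your tie-breaker $|V(P_1)\cup V(P_2)|$ is forced to increase --- the extremal argument then has no strict improvement to run on; (ii) when the witness for $s$ is some $w\in T$, that $w$ may be an interior vertex of the other path, so the bridge $v_i\,w\,s$ is unavailable without breaking internal disjointness; (iii) appealing to 2-connectivity for ``alternative local routes'' is not a step, since Menger's theorem produces global $uv$-paths with no control over disjointness from the untouched path or monotonicity along $P$. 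A correct proof must resolve exactly these interferences; identifying them and asserting they can be handled is a plan, not a proof. As it stands, the proposal establishes only the easy part (existence of some order-respecting internally disjoint pair) and leaves conclusion (b) unproved.
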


Now we show the bipartite analogue of P\'{o}sa's lemma,
which proof is very similar to
Jeckson's proof \cite{Ja85}. 

\begin{Lemma}\label{bi2we}
 Let $G$ be a 2-connected bipartite graph with bipartition $(X, Y)$, and $P$ be a $uv$-path in $G$.\\
(i) If $u \in X$ and $v \in Y$ then $G$ contains a cycle of length at least $\min \{|V(P)|, 2(d_P(u)+d_P(v)-1)\}$.\\
(ii) If $u, v \in X$ then $G$ contains a cycle of length at least $\min \{|V(P)|-1,2(d_P(u)+d_P(v)-2)\}$.
\end{Lemma}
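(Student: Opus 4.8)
The plan is to prove the bipartite analogue of P\'osa's lemma by following the structure of Jackson's argument, using Lemma \ref{g-bi} as the main engine. The key idea is that in a $2$-connected graph, a $uv$-path $P$ together with the internally disjoint $uv$-paths $P_1, P_2$ guaranteed by Lemma \ref{g-bi} forms a cycle $C = P_1 \cup P_2$, and we want to lower-bound the length of this cycle. The extra factor of $2$ and the adjustments by $-1$ and $-2$ in the two cases are the bipartite signature: in a bipartite graph a cycle has even length, and any two neighbours of a fixed vertex lie in the same colour class, which forces the relevant neighbour sets to be separated along $P$ by at least one vertex of the opposite colour.

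First I would set up notation. Write $P = x_0 x_1 \cdots x_m$ with $x_0 = u$ and $x_m = v$, and colour the vertices alternately so that consecutive vertices lie in opposite classes. Let $N_P(u)$ and $N_P(v)$ denote the neighbours of $u$ and $v$ on $P$. The crucial bipartite observation is that all vertices of $N_P(u)$ lie in one colour class and all of $N_P(v)$ lie in one colour class (each opposite to the class of the respective endpoint). In case (i), where $u \in X$ and $v \in Y$, the neighbours of $u$ are in $Y$ and those of $v$ are in $X$, so they sit in complementary classes; in case (ii), where $u, v \in X$, both neighbour sets lie in $Y$. I would then apply Lemma \ref{g-bi}: if $u$ and $v$ have a common neighbour outside $P$ we can often close a long cycle directly, so the substantive case is when $N_P(u \text{ and } v)$ is captured by $V(P_1) \cup V(P_2)$, whence $C = P_1 \cup P_2$ contains at least $|N_P(u)| + |N_P(v)|$ many marked vertices of $P$ in addition to $u$ and $v$ themselves.

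The heart of the estimate is to convert a count of neighbours into a count of cycle vertices. Each neighbour of $u$ on $C$ is, together with its predecessor along the cyclic order, forced to contribute at least two vertices to the even cycle $C$, because $C$ is bipartite and must alternate colours; consecutive marked neighbours cannot be adjacent on $C$ when they lie in the same colour class, so between any two of them lies at least one vertex of the opposite class. Carefully charging one ``padding'' vertex to each distinct neighbour and accounting for the shared endpoints $u,v$ yields the bound $2(d_P(u) + d_P(v) - 1)$ in case (i). In case (ii), since $N_P(u)$ and $N_P(v)$ now lie in the \emph{same} colour class $Y$, they can overlap and the parity bookkeeping loses one more unit at each end, producing the slightly weaker bound $2(d_P(u) + d_P(v) - 2)$ together with the $|V(P)| - 1$ term reflecting that a $uu'$-path with both ends in $X$ spans an odd number of vertices and the longest even cycle through it omits one.

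The main obstacle I expect is the careful parity and double-counting argument that produces exactly the constants $-1$ and $-2$, rather than a more lossy bound. One must verify that when closing $P_1 \cup P_2$ into a cycle, the marked neighbours of $u$ and of $v$ are distributed so that no two same-class neighbours become consecutive, and that the endpoints $u, v$ are not overcounted; handling the degenerate subcases (for instance when $u$ and $v$ share neighbours on $P$, or when one of $d_P(u), d_P(v)$ is small) without degrading the constant is where the delicacy lies. I would manage this by fixing the cyclic orientation of $C$, listing the marked vertices in cyclic order, and assigning to each a unique interval of at least two vertices, then subtracting a controlled correction at the two ``seams'' corresponding to $u$ and $v$ to obtain the stated bounds.
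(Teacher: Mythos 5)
There is a genuine gap, and it sits at the heart of the argument rather than in the bookkeeping. You propose to take the cycle $C=P_1\cup P_2$ supplied by Lemma \ref{g-bi} in all situations and extract the bound $2(d_P(u)+d_P(v)-1)$ from it by a charging/padding count. Two things go wrong. First, the condition you say must be verified --- that ``no two same-class neighbours become consecutive'' on $C$ --- is vacuous: in a bipartite graph, two vertices in the same colour class are never adjacent. The adjacencies that actually destroy the charging are between a neighbour of $u$ and a neighbour of $v$ (these lie in \emph{opposite} classes in case (i)), and such pairs can be consecutive on $C$; whenever they are, your two-vertex intervals overlap and the count degrades toward $d_P(u)+d_P(v)$, far below $2(d_P(u)+d_P(v)-1)$. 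Second, and worse, in such ``crossing'' configurations the cycle $P_1\cup P_2$ can itself be too short, so no bookkeeping on that cycle can succeed. A concrete example: let $G$ consist of the path $P=x_1y_1x_2y_2x_3y_3x_4y_4x_5y_5$ together with the chords $x_1y_3$, $x_1y_4$, $y_5x_2$, $y_5x_4$, with $u=x_1$, $v=y_5$. Then $d_P(u)=d_P(v)=3$, so the lemma promises a cycle of length $\min\{10,10\}=10$, and one exists, namely $x_1y_1x_2y_2x_3y_3x_4y_5x_5y_4x_1$. However, $P_1=x_1y_1x_2y_5$ and $P_2=x_1y_3x_4y_4x_5y_5$ are internally disjoint $uv$-paths, each order-preserving with respect to $P$, whose union contains $N_P(u)\cup N_P(v)$; they satisfy (a) and (b) of Lemma \ref{g-bi}, yet $P_1\cup P_2$ is only an $8$-cycle. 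So the object you analyse need not be long enough, and your proof cannot recover.

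What is missing is the case analysis that the paper (following Jackson) performs \emph{before} invoking Lemma \ref{g-bi}, plus a one-sided rather than two-sided count. Writing $S=N_P(u)$, $T=N_P(v)$ and $T^{+}$ for the set of $P$-successors of $T$, the paper first disposes of crossings: if $S\cap T^{+}\neq\emptyset$ one gets a cycle through all of $V(P)$; if some $y_i\in S$ and $x_j\in T$ satisfy $j\le i$, an explicit cycle $C_1$ built from $P$ and the two chords $uy_i$, $vx_j$ contains $S\cup\bigl(T^{+}\setminus\{y_j\}\bigr)$ inside one colour class, and since a bipartite cycle has length twice its number of vertices in either class, the bound follows. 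Only in the remaining case --- every neighbour of $u$ strictly precedes every neighbour of $v$ along $P$ --- is Lemma \ref{g-bi} used, and there the correct count is to shift $T$ to the set $T^{*}$ of successors of its vertices along $P_1,P_2$, observe $|T^{*}|\ge |T|-1$, and prove $S\cap T^{*}=\emptyset$ using the order-preservation property (a) together with the no-crossing assumption; then $S\cup T^{*}\subseteq V(C)\cap Y$ gives $|V(C)|\ge 2(|S|+|T|-1)$. This shift-and-disjointness step, and the prior elimination of crossings that makes the disjointness provable (and that covers exactly the configurations where $P_1\cup P_2$ may be short, as in the example above), is the substance of the proof; your sketch contains neither.
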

\begin{proof}
(i). Let $P=x_1 y_1 x_2 \cdots x_n y_n$, where $x_1=u$ and $y_n=v$. Let
$S=N_P\left(x_1\right), T=N_P\left(y_n\right)$, and $T^{+}=\left\{y_i \mid x_i \in T\right\}$.
If $S \cap T^{+} \neq \emptyset$, then $G$ contains a cycle of length $|V(P)|$.
Hence we may assume that $S \cap T^{+}=\emptyset$. Suppose there exists $y_i \in S$ and $x_j \in T$ such that $j \leqslant i$.
 Choosing $i$ and $j$ such that $i-j$ is as small a nonnegative integer as possible, it follows that
 $x_{j+1},\cdots,x_i \notin T$ and $y_{j},\cdots,y_{i-1} \notin S$.
 Then
$
C_1=x_1 y_1 x_2 \cdots x_j y_n x_n \cdots y_i x_1
$
is a cycle of $G$ such that $S \cup\left(T^{+} \backslash\left\{y_j\right\}\right) \subseteq V\left(C_1\right) \cap Y$. Thus
$$
\left|V\left(C_1\right)\right| \geqslant 2\left(|S|+\left|T^{+}\right|-1\right)=2(d_P(u)+d_P(v)-1) .
$$
Therefore assume that
\begin{equation}\label{i<j}
\mbox{if }y_i \in S\mbox{ and }x_j \in T, \mbox{ then }i<j.
\end{equation}
Let $P_1$ and $P_2$ be internally disjoint $u v$-paths which satisfy properties (a) and (b) of Lemma \ref{g-bi},
and choose $x_j \in T$. It follows from (b) of Lemma \ref{g-bi}
that $x_j \in P_i$ for some $i \in\{1,2\}$. Let $z_j$ be the successor of $x_j$ along $P_i$ and
put $T^*=\left\{z_j \mid x_j \in T\right\}$. Since $P_1$ and $P_2$ are internally disjoint, $\left|T^*\right| \geqslant|T|-1$.
Moreover, using (a) of Lemma \ref{g-bi} and (\ref{i<j}), it follows that $S \cap T^*=\emptyset$.
Putting $C_2=P_1 \cup P_2$, we have $S \cup T^* \subseteq V\left(C_2\right) \cap Y$ and hence
$$
\left|V\left(C_2\right)\right| \geqslant 2\left(|S|+\left|T^*\right|\right) \geqslant 2(d_P(u)+d_P(v)-1) .
$$

(ii). Putting $P=x_1 y_1 x_2 \cdots y_{n-1} x_n, S=N_P\left(x_1\right), T=N_P\left(x_n\right)$,
and $T^{+}=\left\{y_i \mid y_{i-1} \in T\right\}$, the proof proceeds on similar lines to that of (i).

\end{proof}

Note that if $P$ is a maximal path in $G$ with two end-vertices $u$ and $v$, then $d_G(u)=d_P(u)$ and $d_G(v)=d_P(v)$.
So, we get the following corollary.

\begin{Corollary}\label{bi2} (Jeckson \cite{Ja85}).
 Let $G$ be a 2-connected bipartite graph with bipartition $(X, Y)$, and $P$ be a maximal path in $G$. Let the end vertices of $P$ be $u$ and $v$.\\
(i) If $u \in X$ and $v \in Y$ then $G$ contains a cycle of length at least $\min \{|V(P)|, 2(d(u)+d(v)-1)\}$.\\
(ii) If $u, v \in X$ then $G$ contains a cycle of length at least $\min \{|V(P)|-1,2(d(u)+d(v)-2)\}$.
\end{Corollary}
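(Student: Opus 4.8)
The plan is to obtain this statement as an immediate specialization of Lemma \ref{bi2we}, whose two conclusions are already phrased in terms of the path-degrees $d_P(u)$ and $d_P(v)$. The only thing that must be verified is that, for a \emph{maximal} path, the path-degree of each endpoint coincides with its degree in $G$; once this is recorded, the corollary is a direct substitution.

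First I would spell out the maximality observation noted above. Since $P$ is a maximal path with endpoint $u$, every neighbour of $u$ in $G$ must lie on $P$: if $u$ had a neighbour $w \notin V(P)$, then prepending the edge $uw$ to $P$ would produce a strictly longer path, contradicting maximality. Hence $N_G(u) \subseteq V(P)$, so that $d_G(u) = d_P(u)$, and the same argument applied to the other endpoint gives $d_G(v) = d_P(v)$.

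Then I would substitute these equalities into the two cases of Lemma \ref{bi2we}. In case (i), with $u \in X$ and $v \in Y$, Lemma \ref{bi2we}(i) supplies a cycle of length at least $\min\{|V(P)|, 2(d_P(u)+d_P(v)-1)\} = \min\{|V(P)|, 2(d(u)+d(v)-1)\}$; in case (ii), with $u,v \in X$, Lemma \ref{bi2we}(ii) supplies a cycle of length at least $\min\{|V(P)|-1, 2(d_P(u)+d_P(v)-2)\} = \min\{|V(P)|-1, 2(d(u)+d(v)-2)\}$. These are exactly the two stated bounds.

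There is no genuine obstacle: all of the combinatorial content lives in Lemma \ref{bi2we}, and the corollary is precisely its restriction to maximal paths. The single point deserving comment is the degree identity $d_G(\cdot)=d_P(\cdot)$ at the endpoints, which is settled by the one-line extension argument; after that the conclusion follows by inspection.
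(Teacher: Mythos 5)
Your proposal is correct and coincides with the paper's own argument: the paper likewise observes that maximality of $P$ forces $N_G(u)\subseteq V(P)$ and $N_G(v)\subseteq V(P)$, so $d_G(u)=d_P(u)$ and $d_G(v)=d_P(v)$, and then reads off both bounds directly from Lemma \ref{bi2we}. Nothing further is needed.
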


To prove our results, we also need a definition from Kopylov.

\begin{Definition}
(Kopylov \cite{Ko77}). Let $G$ be a graph and $\alpha$ be a natural number. Delete all
vertices of degree at most $\alpha$ from $G$; for the resulting graph $G_0$, again delete all vertices
of degree at most $\alpha$ from it. We keep running this progress until the minimum degree of
the resulting graph is at least $\alpha + 1$. The resulting graph, denoted by $H(G, \alpha)$, is called
the $(\alpha + 1)$-core of $G$.
\end{Definition}


{\bf Proof of Theorem \ref{CB}:}
Let $h=\lfloor \frac{n-k}{2}\rfloor$.
Let $X,Y$ be the two partition sets of $G$ with $|X|=n\le |Y|=b$.
Suppose for contradiction that
 $G$
 contains no cycle of length $2n-2k$
or more and
\begin{equation}\label{fsr}
N(K_{s,t},G)> \begin{cases}
\max \{f_{s, t}(b,n,n-k,r),f_{s, s}(b,n,n-k,h)\}, & s = t, \\
\max \{f_{s, t}(b,n,n-k,r)+ f_{t, s}(b,n,n-k,r),\\f_{s, t}(b,n,n-k,h)+f_{t, s}(b,n,n-k,h)\}, & s \not= t. \end{cases}
\end{equation}
Let $G_0$ be the bipartite graph obtained from $G$ by adding a dominating vertex
$x$ to $X$ such that $x$ is adjacent to each vertex of $Y$ and adding a dominating vertex
$y$ to $Y$ such that $y$ is adjacent to each vertex of $X$ and adding an edge $xy$.
Then $G_0$ is 2-connected with bipartition $(X', Y')$, has $n+1+b+1$ vertices and contains no cycle of length
$2n-2k+2$ or more, where $X'=X\cup \{x\}$ and $Y'=Y\cup \{y\}$.
Let $G'$ be the $(2n-2k+2)$-closure of $G_0$, i.e., add edges to $G_0$ until any
additional edge creates a cycle of length at least $2n-2k+2$.
Thus $\delta(G')\ge r+1$ and
for any nonadjacent vertices $u\in X'$ and $v\in Y'$ of $G'$, there exists a path
on at least $2n-2k+2$ vertices between $u$ and $v$.
Let $G^*=G'-\{x,y\}$.
Therefore $N(K_{s,t},G^*)\ge N(K_{s,t},G_0-\{x,y\})\ge N(K_{s,t},G)$.
We have that

{\bf Claim 1.}
$H(G',h+1)$ is not empty.

\begin{proof}
Suppose $H(G',h+1)$ is empty.
Note that $x$ is adjacent to each vertex of $Y'$ of $G'$ and $y$ is adjacent to each vertex of $X'$ of $G'$.
For convenience, we divide the proof into the following two cases.

{\bf Case 1.} $s=t$.
In the process of getting $H(G',h+1)$, for the first $n-h$ vertices that are not $x$ in $X'$ of $G'$,
once the $i$-th vertex which has at most $h$ neighbors that are not $y$
is deleted, we delete at most
$\left(\begin{array}{c}
h \\
s
\end{array}\right)\left(\begin{array}{c}
n-i \\
s-1
\end{array}\right)$
copies of $K_{s,s}$, where $i\in [n-h]$;
for the first $b-h$ vertices that are not $y$ in $Y'$ of $G'$,
once the $j$-th vertex which has at most $h$ neighbors that are not $x$ is deleted, we delete at most
$\left(\begin{array}{c}
h \\
s
\end{array}\right)\left(\begin{array}{c}
b-j \\
s-1
\end{array}\right)$
copies of $K_{s,s}$, where $j\in [b-h]$;
 for all the last $2h$ vertices that are not $x,y$ in $G'$, we delete at most
 $\left(\begin{array}{c}
h \\
s
\end{array}\right)\left(\begin{array}{c}
h \\
s
\end{array}\right)$
copies of $K_{s,s}$.
Then we count the number of copies of $K_{s,s}$ as follows.
$$\begin{aligned}
N(K_{s,s},G^*) & \le \sum_{i=1}^{n-h}\left(\begin{array}{c}
h \\
s
\end{array}\right)\left(\begin{array}{c}
n-i \\
s-1
\end{array}\right)
+\sum_{j=1}^{b-h}\left(\begin{array}{c}
h \\
s
\end{array}\right)\left(\begin{array}{c}
b-j \\
s-1
\end{array}\right)+\left(\begin{array}{c}
h \\
s
\end{array}\right)\left(\begin{array}{c}
h \\
s
\end{array}\right)\\&=
\left(\begin{array}{c}
h \\
s
\end{array}\right)
\left(\left(\begin{array}{c}
n \\
s
\end{array}\right)-\left(\begin{array}{c}
h \\
s
\end{array}\right)\right)
+
\left(\begin{array}{c}
h \\
s
\end{array}\right)
\left(\left(\begin{array}{c}
b \\
s
\end{array}\right)-\left(\begin{array}{c}
h \\
s
\end{array}\right)\right)
+\left(\begin{array}{c}
h \\
s
\end{array}\right)\left(\begin{array}{c}
h \\
s
\end{array}\right)
\\&=
\left(\begin{array}{c}
h \\
s
\end{array}\right)
\left(\left(\begin{array}{c}
b \\
s
\end{array}\right)-\left(\begin{array}{c}
h \\
s
\end{array}\right)\right)
+\left(\begin{array}{c}
h \\
s
\end{array}\right)\left(\begin{array}{c}
n \\
s
\end{array}\right)
\\&\le
\left(\begin{array}{c}
n-k-h \\
s
\end{array}\right)\left(\left(\begin{array}{c}
b \\
s
\end{array}\right)-\left(\begin{array}{c}
h \\
s
\end{array}\right)\right)+
\left(\begin{array}{c}
h \\
s
\end{array}\right)
\left(\begin{array}{c}
n \\
s
\end{array}\right)
\\&=f_{s, s}(b,n,n-k,h)
\end{aligned}$$
a contradiction to (\ref{fsr}).

{\bf Case 2.} $s\not=t$.
In the process of getting $H(G',h+1)$,
for the first $n-h$ vertices that are not $x$ in $X'$ of $G'$,
once the $i$-th vertex which has at most $h$ neighbors that are not $y$ is deleted, we delete at most
$\left(\begin{array}{c}
h \\
s
\end{array}\right)\left(\begin{array}{c}
n-i \\
t-1
\end{array}\right)+
\left(\begin{array}{c}
h \\
t
\end{array}\right)\left(\begin{array}{c}
n-i \\
s-1
\end{array}\right)$
copies of $K_{s,t}$, where $i\in [n-h]$;
for the first $b-h$ vertices that are not $y$ in $Y'$ of $G'$,
once the $j$-th vertex which has at most $h$ neighbors that are not $x$ is deleted, we delete at most
 $\left(\begin{array}{c}
h \\
s
\end{array}\right)\left(\begin{array}{c}
b-j \\
t-1
\end{array}\right)+
\left(\begin{array}{c}
h \\
t
\end{array}\right)\left(\begin{array}{c}
b-j \\
s-1
\end{array}\right)$
copies of $K_{s,t}$, where $j\in [b-h]$;
 for all the last $2h$ vertices that are not $x,y$ of $G'$,
we delete at most
 $\left(\begin{array}{c}
h \\
s
\end{array}\right)\left(\begin{array}{c}
h \\
t
\end{array}\right)+
\left(\begin{array}{c}
h \\
t
\end{array}\right)\left(\begin{array}{c}
h \\
s
\end{array}\right)$
copies of $K_{s,t}$.
Then the
 number of copies of $K_{s,t}$ can be estimated as follows.

$$\begin{aligned}
N(K_{s,t},G^*)\le &\sum_{i=1}^{n-h}\left(
\left(\begin{array}{c}
h \\
s
\end{array}\right)\left(\begin{array}{c}
n-i \\
t-1
\end{array}\right)+
\left(\begin{array}{c}
h \\
t
\end{array}\right)\left(\begin{array}{c}
n-i \\
s-1
\end{array}\right)
\right)
+\\&\sum_{j=1}^{b-h}\left(
\left(\begin{array}{c}
h \\
s
\end{array}\right)\left(\begin{array}{c}
b-j \\
t-1
\end{array}\right)+
\left(\begin{array}{c}
h \\
t
\end{array}\right)\left(\begin{array}{c}
b-j \\
s-1
\end{array}\right)
\right)
+2\left(\begin{array}{c}
h \\
s
\end{array}\right)\left(\begin{array}{c}
h \\
t
\end{array}\right)
\\=&
\left(\begin{array}{c}
h \\
s
\end{array}\right)
\left(\left(\begin{array}{c}
n \\
t
\end{array}\right)-\left(\begin{array}{c}
h \\
t
\end{array}\right)\right)
+
\left(\begin{array}{c}
h \\
t
\end{array}\right)
\left(\left(\begin{array}{c}
b \\
s
\end{array}\right)-\left(\begin{array}{c}
h \\
s
\end{array}\right)\right)
+
\left(\begin{array}{c}
h \\
s
\end{array}\right)\left(\begin{array}{c}
h \\
t
\end{array}\right)+
\\&
\left(\begin{array}{c}
h \\
t
\end{array}\right)
\left(\left(\begin{array}{c}
n \\
s
\end{array}\right)-\left(\begin{array}{c}
h \\
s
\end{array}\right)\right)
+
\left(\begin{array}{c}
h \\
s
\end{array}\right)
\left(\left(\begin{array}{c}
b \\
t
\end{array}\right)-\left(\begin{array}{c}
h \\
t
\end{array}\right)\right)
+
\left(\begin{array}{c}
h \\
s
\end{array}\right)\left(\begin{array}{c}
h \\
t
\end{array}\right)
\\=&
\left(\begin{array}{c}
h \\
t
\end{array}\right)
\left(\left(\begin{array}{c}
b \\
s
\end{array}\right)-\left(\begin{array}{c}
h \\
s
\end{array}\right)\right)
+
\left(\begin{array}{c}
h \\
s
\end{array}\right)\left(\begin{array}{c}
n \\
t
\end{array}\right)
+\\&
\left(\begin{array}{c}
h \\
s
\end{array}\right)
\left(\left(\begin{array}{c}
b \\
t
\end{array}\right)-\left(\begin{array}{c}
h \\
t
\end{array}\right)\right)
+
\left(\begin{array}{c}
h \\
t
\end{array}\right)\left(\begin{array}{c}
n \\
s
\end{array}\right)
\\ \le &
\left(\begin{array}{c}
n-k-h \\
t
\end{array}\right)
\left(\left(\begin{array}{c}
b \\
s
\end{array}\right)-\left(\begin{array}{c}
h \\
s
\end{array}\right)\right)
+
\left(\begin{array}{c}
h \\
s
\end{array}\right)\left(\begin{array}{c}
n \\
t
\end{array}\right)+
\\&
\left(\begin{array}{c}
n-k-h \\
s
\end{array}\right)
\left(\left(\begin{array}{c}
b \\
t
\end{array}\right)-\left(\begin{array}{c}
h \\
t
\end{array}\right)\right)
+
\left(\begin{array}{c}
h \\
t
\end{array}\right)\left(\begin{array}{c}
n \\
s
\end{array}\right)
\\=&f_{s, t}(b,n,n-k,h)+ f_{t, s}(b,n,n-k,h)
\end{aligned}$$
a contradiction to (\ref{fsr}).

\end{proof}
{\bf Claim 2.}
$H(G',h+1)$ is a complete bipartite graph.

\begin{proof}
If there exist two nonadjacent vertices $u$ and $v$ in $H(G',h+1)$ and $u\in X',v\in Y'$, then
there is a path on at least $2n- 2k + 2$ vertices between $u$ and $v$.
Among all these nonadjacent pairs of
vertices in $H(G',h+1)$, we choose $u, v\in H(G',h+1)$ with $u\in X',v\in Y'$ such that the path between them is the
longest.
If all neighbors of $u$ in $H(G',h+1)$ lie in $P$ and all neighbors of $v$ in $H(G',h+1)$ lie in $P$,
then by (i) of Lemma \ref{bi2we}, $G'$ has a cycle of length at least min$\{2n-2k+2, 2(d_P(u)+d_P(v)-1)\}$=
min$\{2n-2k+2, 2(h+2+h+2-1)\}=2n-2k+2$, a contradiction.
If there exists a neighbor $x$ of $u$ in $H(G',h+1)$ not lie in $P$
and there exists a neighbor $y$ of $v$ in $H(G',h+1)$ not lie in $P$,
then by the maximality of $P$,
$xy\in E(H(G',h+1))$. Then $G'$ has a cycle of length at least $2n-2k+4$, a contradiction.
Hence either there exists a neighbor of $u$ in $H(G',h+1)$ not lie in $P$ and all neighbors of $v$ in $H(G',h+1)$ lie in $P$,
or there exists a neighbor of $v$ in $H(G',h+1)$ not lie in $P$ and all neighbors of $u$ in $H(G',h+1)$ lie in $P$.
W.l.o.g., suppose that
there exists a neighbor $x$ of $u$ in $H(G',h+1)$ not lie in $P$
and all neighbors of $v$ in $H(G',h+1)$ lie in $P$.
If there exists a neighbor $x'$ of $x$ in $H(G',h+1)$ not lie in $P$, then
by the maximality of $P$,
$x'v\in E(H(G',h+1))$. Then $G'$ has a cycle of length at least $2n-2k+4$, a contradiction.
Thus all neighbors of $x$ in $H(G',h+1)$ lie in $P$
and there is a path on at least $2n- 2k + 3$ vertices between $x$ and $v$, and $x,v$ in same part.
By (ii) of Lemma \ref{bi2we}, $G'$ has a cycle of length at least min$\{2n-2k+3-1, 2(d_P(x)+d_P(v)-2)\}$=
min$\{2n-2k+2, 2(h+2+h+2-2)\}=2n-2k+2$, a contradiction.

\end{proof}


{\bf Claim 3.} Let $l+1=$min$\{|V(H(G',h+1))\cap X'|,|V(H(G',h+1))\cap Y'|\}$.
Then
$r\le n-k-l \le h$.

\begin{proof}
By the definition of $H(G',h+1)$, $l+1\ge h+2$.
Thus $n-k-l \le h$.
Next we show that $l\le n-k-r$.
Suppose $l+1\ge n-k-r+2$.
If $u\in V(G'-H(G',h+1))$, then $u$ is not adjacent to at least one vertex in $ H(G',h+1)$
such that they are in different parts.
Otherwise, $u\in H(G',h+1)$.
We pick $u\in V(G'-H(G',h+1))$ and $v\in V(H(G',h+1))$ satisfying the following three conditions:
(a) $u,v$ in different parts, (b) $u$ and $v$ are not adjacent, and (c) a longest path in $G'$ from $u$ to $v$
contains the largest number of edges among such nonadjacent pairs.
Let $P$ be a longest path in $G'$ from $u$ to $v$.
Clearly, $|V(P)|\ge 2n-2k+2$ as $G'$ is edge maximal.
If all neighbors of $u$ in $G'-H(G',h+1)$ lie in $P$ and all neighbors of $v$ in $H(G',h+1)$ lie in $P$,
then by (i) of Lemma \ref{bi2we}, $G'$ has a cycle of length at least min$\{2n-2k+2, 2(d_P(u)+d_P(v)-1)\}$=
min$\{2n-2k+2, 2(r+1+n-k-r+2-1)\}=2n-2k+2$, a contradiction.
If there exists a neighbor $x$ of $u$ in $G'-H(G',h+1)$ not lie in $P$
and there exists a neighbor $y$ of $v$ in $H(G',h+1)$ not lie in $P$,
then by the maximality of $P$,
$xy\in E(G')$. Then $G'$ has a cycle of length at least $2n-2k+4$, a contradiction.
Hence either there exists a neighbor $x$ of $u$ in $G'-H(G',h+1)$ not lie in $P$ and all neighbors of $v$ in $H(G',h+1)$ lie in $P$,
or there exists a neighbor $y$ of $v$ in $H(G',h+1)$ not lie in $P$ and all neighbors of $u$ in $G'-H(G',h+1)$ lie in $P$.
W.l.o.g., suppose that
there exists a neighbor $x$ of $u$ in $G'-H(G',h+1)$ not lie in $P$
and all neighbors of $v$ in $H(G',h+1)$ lie in $P$.
If there exists a neighbor $x'$ of $x$ in $H(G',h+1)$ not lie in $P$, then
by the maximality of $P$,
$x'v\in E(H(G',h+1))$. Then $G'$ has a cycle of length at least $2n-2k+4$, a contradiction.
Thus all neighbors of $x$ in $H(G',h+1)$ lie in $P$
and
there is a path on at least $2n- 2k + 3$ vertices between $x$ and $v$, and $x,v$ in same part.
By (ii) of Lemma \ref{bi2we}, $G'$ has a cycle of length at least min$\{2n-2k+3-1, 2(d_P(x)+d_P(v)-2)\}$=
min$\{2n-2k+2, 2(r+1+n-k-r+2-2)\}=2n-2k+2$, a contradiction.
Thus $l+1\le n+1-k-r$.
Therefore $r\le n-k-l \le h$.

\end{proof}

{\bf Claim 4.}
$H(G',h+1)\not= H(G',n+1-k-l)$.

\begin{proof}
Suppose $H(G',h+1)= H(G',n+1-k-l)$.
Note that $x,y$ must be contained in $H(G',h+1)$ as
$x$ is adjacent to each vertex of $Y'$ of $G'$ and $y$ is adjacent to each vertex of $X'$ of $G'$.
We divide the proof into the following two cases:

{\bf Case 1.}
$l+1=|V(H(G',h+1))\cap X'|$.

{\bf Subcase 1.1.}
$s=t$.
Firstly, in $H(G',h+1)=H(G',n+1-k-l)$,
the number of copies of $K_{s,s}$ that do not include $x,y$ is at most
$\left(\begin{array}{c}
l \\
s
\end{array}\right)\left(\begin{array}{c}
b \\
s
\end{array}\right)$.
Secondly, in $G'-H(G',n+1-k-l)$, every vertex had at most $n-k-l$ neighbors that were not $y$ at the time of its deletion.
We have
$$\begin{aligned}
N(K_{s,s},G^*)&\le
\left(\begin{array}{c}
l \\
s
\end{array}\right)\left(\begin{array}{c}
b \\
s
\end{array}\right)+
\sum_{i=1}^{n-l}\left(\begin{array}{c}
n-k-l \\
s
\end{array}\right)\left(\begin{array}{c}
n-i \\
s-1
\end{array}\right)
\\&=
\left(\begin{array}{c}
n-k-l \\
s
\end{array}\right)
\left(\left(\begin{array}{c}
n \\
s
\end{array}\right)-\left(\begin{array}{c}
l \\
s
\end{array}\right)\right)
+\left(\begin{array}{c}
l \\
s
\end{array}\right)\left(\begin{array}{c}
b \\
s
\end{array}\right)
\\&=f_{s, s}(b,n,n-k,n-k-l)
\\&\le\max \{f_{s, s}(b,n,n-k,r),f_{s, s}(b,n,n-k,h)\}\end{aligned}$$
a contradiction to (\ref{fsr}).

{\bf Subcase 1.2.} $s\not=t$.
Firstly, in $H(G',h+1)=H(G',n+1-k-l)$,
the number of copies of $K_{s,s}$ that do not include $x,y$ is at most
 $\left(\begin{array}{c}
l \\
s
\end{array}\right)\left(\begin{array}{c}
b \\
t
\end{array}\right)+
\left(\begin{array}{c}
l \\
t
\end{array}\right)\left(\begin{array}{c}
b \\
s
\end{array}\right)$.
Secondly, in $G'-H(G',n+1-k-l)$, every vertex had at most $n-k-l$ neighbors that were not $y$ at the time of its deletion.
We have
$$\begin{aligned}
N(K_{s,s},G^*)\le & \sum_{i=1}^{n-l}
\left(\begin{array}{c}
n-k-l \\
s
\end{array}\right)\left(\begin{array}{c}
n-i \\
t-1
\end{array}\right)+
\sum_{i=1}^{n-l}\left(\begin{array}{c}
n-k-l \\
t
\end{array}\right)\left(\begin{array}{c}
n-i \\
s-1
\end{array}\right)+
\\&
\left(\begin{array}{c}
l \\
s
\end{array}\right)\left(\begin{array}{c}
b \\
t
\end{array}\right)+
\left(\begin{array}{c}
l \\
t
\end{array}\right)\left(\begin{array}{c}
b \\
s
\end{array}\right)
\\=&
\left(\begin{array}{c}
n-k-l \\
s
\end{array}\right)
\left(\left(\begin{array}{c}
n \\
t
\end{array}\right)-\left(\begin{array}{c}
l \\
t
\end{array}\right)\right)
+
\left(\begin{array}{c}
l \\
t
\end{array}\right)
\left(\begin{array}{c}
b \\
s
\end{array}\right)+
\\&
\left(\begin{array}{c}
n-k-l \\
t
\end{array}\right)
\left(\left(\begin{array}{c}
n \\
s
\end{array}\right)-\left(\begin{array}{c}
l \\
s
\end{array}\right)\right)
+
\left(\begin{array}{c}
l \\
s
\end{array}\right)
\left(\begin{array}{c}
b \\
t
\end{array}\right)
\\=&f_{s, t}(b,n,n-k,n-k-l)+ f_{t, s}(b,n,n-k,n-k-l)
\\ \le &\max \{f_{s, t}(b,n,n-k,r)+ f_{t, s}(b,n,n-k,r),f_{s, t}(b,n,n-k,h)+
\\&f_{t, s}(b,n,n-k,h)\}
\end{aligned}$$
a contradiction to (\ref{fsr}).

{\bf Case 2.}
$l+1=|V(H(G',h+1))\cap Y'|$.

{\bf Subcase 2.1.}
$s=t$.
Firstly, in $H(G',h+1)=H(G',n+1-k-l)$,
the number of copies of $K_{s,s}$ that do not include $x,y$ is at most
$\left(\begin{array}{c}
l \\
s
\end{array}\right)\left(\begin{array}{c}
n \\
s
\end{array}\right)$.
Secondly, in $G'-H(G',n+1-k-l)$, every vertex had at most $n-k-l$ neighbors that were not $x$ at the time of its deletion.
We have
$$\begin{aligned}
N(K_{s,s},G^*)&\le \sum_{i=1}^{b-l}\left(\begin{array}{c}
n-k-l \\
s
\end{array}\right)\left(\begin{array}{c}
b-i \\
s-1
\end{array}\right)
+\left(\begin{array}{c}
l \\
s
\end{array}\right)\left(\begin{array}{c}
n \\
s
\end{array}\right)
\\&=
\left(\begin{array}{c}
n-k-l \\
s
\end{array}\right)
\left(\left(\begin{array}{c}
b \\
s
\end{array}\right)-\left(\begin{array}{c}
l \\
s
\end{array}\right)\right)
+\left(\begin{array}{c}
l \\
s
\end{array}\right)\left(\begin{array}{c}
n \\
s
\end{array}\right)
\\&\le
\left(\begin{array}{c}
n-k-l \\
s
\end{array}\right)
\left(\left(\begin{array}{c}
n \\
s
\end{array}\right)-\left(\begin{array}{c}
l \\
s
\end{array}\right)\right)
+\left(\begin{array}{c}
l \\
s
\end{array}\right)\left(\begin{array}{c}
b \\
s
\end{array}\right)
\\&=f_{s, s}(b,n,n-k,n-k-l)
\\&\le\max \{f_{s, s}(b,n,n-k,r),f_{s, s}(b,n,n-k,h)\}
\end{aligned}$$
a contradiction to (\ref{fsr}).

{\bf Subcase 2.2.} $s\not=t$.
Firstly, in $H(G',h+1)=H(G',n+1-k-l)$,
the number of copies of $K_{s,s}$ that do not include $x,y$ is at most
 $\left(\begin{array}{c}
l \\
s
\end{array}\right)\left(\begin{array}{c}
n \\
t
\end{array}\right)+
\left(\begin{array}{c}
l \\
t
\end{array}\right)\left(\begin{array}{c}
n \\
s
\end{array}\right)$.
Secondly, in $G'-H(G',n+1-k-l)$, every vertex had at most $n-k-l$ neighbors that were not $x$ at the time of its deletion.
We have

$$\begin{aligned}
N(K_{s,s},G^*)\le & \sum_{i=1}^{b-l}
\left(\begin{array}{c}
n-k-l \\
s
\end{array}\right)\left(\begin{array}{c}
b-i \\
t-1
\end{array}\right)+
\sum_{i=1}^{b-l}\left(\begin{array}{c}
n-k-l \\
t
\end{array}\right)\left(\begin{array}{c}
b-i \\
s-1
\end{array}\right)+
\\&\left(\begin{array}{c}
l \\
s
\end{array}\right)\left(\begin{array}{c}
n \\
t
\end{array}\right)+
\left(\begin{array}{c}
l \\
t
\end{array}\right)\left(\begin{array}{c}
n \\
s
\end{array}\right)
\\=&
\left(\begin{array}{c}
n-k-l \\
s
\end{array}\right)
\left(\left(\begin{array}{c}
b \\
t
\end{array}\right)-\left(\begin{array}{c}
l \\
t
\end{array}\right)\right)
+
\left(\begin{array}{c}
l \\
t
\end{array}\right)
\left(\begin{array}{c}
n \\
s
\end{array}\right)+
\\&
\left(\begin{array}{c}
n-k-l \\
t
\end{array}\right)
\left(\left(\begin{array}{c}
b \\
s
\end{array}\right)-\left(\begin{array}{c}
l \\
s
\end{array}\right)\right)
+
\left(\begin{array}{c}
l \\
s
\end{array}\right)
\left(\begin{array}{c}
n \\
t
\end{array}\right)
\\ \le &
\left(\begin{array}{c}
n-k-l \\
s
\end{array}\right)
\left(\left(\begin{array}{c}
n \\
t
\end{array}\right)-\left(\begin{array}{c}
l \\
t
\end{array}\right)\right)
+
\left(\begin{array}{c}
l \\
t
\end{array}\right)
\left(\begin{array}{c}
b \\
s
\end{array}\right)+
\\&
\left(\begin{array}{c}
n-k-l \\
t
\end{array}\right)
\left(\left(\begin{array}{c}
n \\
s
\end{array}\right)-\left(\begin{array}{c}
l \\
s
\end{array}\right)\right)
+
\left(\begin{array}{c}
l \\
s
\end{array}\right)
\left(\begin{array}{c}
b \\
t
\end{array}\right)
\\=&f_{s, t}(b,n,n-k,n-k-l)+ f_{t, s}(b,n,n-k,n-k-l)
\\
\le & \max \{f_{s, t}(b,n,n-k,r)+ f_{t, s}(b,n,n-k,r),f_{s, t}(b,n,n-k,h)+
\\&f_{t, s}(b,n,n-k,h)\}
\end{aligned}$$
a contradiction to (\ref{fsr}).

\end{proof}

{\bf Claim 5.}
$G'$ contains a cycle of length at least $2n-2k+2$.

\begin{proof}
Note that $H(G',h+1)\subseteq H(G',n+1-k-l)$.
By Claim 4, $H(G',h+1)$ is a proper subgraph of $H(G',n+1-k-l)$ and there must be a vertex $u$ in
$H(G',h+1)$ and a vertex $v$ in $H(G',n+1-k-l)$ that are nonadjacent and $u,v$ in different parts.
 Among all such pairs of vertices, we choose $u\in V (H(G', h+1))$ and $v \in
V (H(G', n+1-k-l))$ with $u,v$ in different parts such that there is a longest path $P$ between them. Then $P$ contains at
least $2n-2k+2$ vertices.

If all neighbors of $u$ in $H(G',h+1)$ lie in $P$ and all neighbors of $v$ in $H(G',n+1-k-l)$ lie in $P$,
then by (i) of Lemma \ref{bi2we}, $G'$ has a cycle of length at least min$\{2n-2k+2, 2(d_P(u)+d_P(v)-1)\}$=
min$\{2n-2k+2, 2(l+1+n+1-k-l+1-1)\}=2n-2k+2$, a contradiction.
If there exists a neighbor $x$ of $u$ in $H(G',h+1)$ not lie in $P$
and there exists a neighbor $y$ of $v$ in $H(G',n+1-k-l)$ not lie in $P$,
then by the maximality of $P$,
$xy\in E(H(G',h+1))$. Then $G'$ has a cycle of length at least $2n-2k+4$, a contradiction.
Hence either there exists a neighbor $x$ of $u$ in $H(G',h+1)$ not lie in $P$ and all neighbors of $v$ in $H(G',n+1-k-l)$ lie in $P$,
or there exists a neighbor $y$ of $v$ in $H(G',n+1-k-l)$ not lie in $P$ and all neighbors of $u$ in $H(G',h+1)$ lie in $P$.
W.l.o.g., suppose that
there exists a neighbor $x$ of $u$ in $H(G',h+1)$ not lie in $P$
and all neighbors of $v$ in $H(G',n+1-k-l)$ lie in $P$.
If there exists a neighbor $x'$ of $x$ in $H(G',h+1)$ not lie in $P$, then
by the maximality of $P$,
$x'v\in E(H(G',h+1))$. Then $G'$ has a cycle of length at least $2n-2k+4$, a contradiction.
Thus all neighbors of $x$ in $H(G',h+1)$ lie in $P$
and
there is a path on at least $2n- 2k + 3$ vertices between $x$ and $v$, and $x,v$ in same part.
By (ii) of Lemma \ref{bi2we}, $G'$ has a cycle of length at least min$\{2n-2k+3-1, 2(d_P(x)+d_P(v)-2)\}$=
min$\{2n-2k+2, 2(l+1+n+1-k-l+1-2)\}=2n-2k+2$, a contradiction.
\end{proof}

Claim 5 contradicts our assumption. Hence
the proof of Theorem \ref{CB} is complete.
\\
\\
{\bf Proof of Theorem \ref{PB}:}
Let $h=\lfloor \frac{n-k-1}{2}\rfloor$.
Let $X,Y$ be the two partition sets of $G$ with $|X|=n\le |Y|=b$.
Suppose for contradiction that
 $G$
 contains no path on $2n-2k$ vertices and
\begin{equation}\label{fsr1}
N(K_{s,t},G)> \begin{cases}
\max \{f_{s, t}(b,n,n-k-1,r),f_{s, s}(b,n,n-k-1,h)\}, & s = t, \\
\max \{f_{s, t}(b,n,n-k-1,r)+ f_{t, s}(b,n,n-k-1,r),\\f_{s, t}(b,n,n-k-1,h)+f_{t, s}(b,n,n-k-1,h)\}, & s \not= t. \end{cases}
\end{equation}
Then
 $G$ contains no cycle of length $2n-2k$ or more.
 If $G$ contains a cycle $C$ of length $2n-2k-2$,
 then $|V(C)\cup X|= n-k-1$ and $|V(C)\cup Y|= n-k-1$.
 Since $G$ is connected, we can get a path on $2n-2k$ vertices, a contradiction.
 Thus $G$ contains no cycle of length $2n-2k-2$ or more.
Then by Theorem \ref{CB},\\
 $$N(K_{s,t},G)\le \begin{cases}
\max \{f_{s, t}(b,n,n-k-1,r),f_{s, s}(b,n,n-k-1,h)\}, & s = t, \\
\max \{f_{s, t}(b,n,n-k-1,r)+ f_{t, s}(b,n,n-k-1,r),\\f_{s, t}(b,n,n-k-1,h)+f_{t, s}(b,n,n-k-1,h)\}, & s \not= t, \end{cases}$$
a contradiction to (\ref{fsr1}).
\\

Next we give very short proof of Theorem \ref{C}.
Our proof is very similar to the proof of Theorem  in \cite{LYZ21}.
We give only a sketch and omit the details.
Similar to Theorem \ref{C}, we have Theorem \ref{P} and skip the details of the proof.

{\bf Proof of Theorem \ref{C}:}
Let $n \geqslant k \geqslant 5$ and $h=\lfloor\frac{k-1}{2}\rfloor$. Let $G$ be an edge-maximal counter-example, i.e., adding any additional edge to $G$ creates a cycle of length at least $k$ and
\begin{equation}\label{gsr}
N(K_{s,t},G)>\max \left\{g_{s, t}(n,k,r), g_{s, t}(n,k,h)\right\}.
\end{equation}
Thus for each pair of nonadjacent vertices $u$ and $v$ of $G$,
there exists a path on at least $k$ vertices between $u$ and $v$. We have that

{\bf Claim 1} (\cite{LYZ21}). $H(G, h)$ is not empty.

{\bf Claim 2} (\cite{LYZ21}). $H(G, h)$ is a clique.

%
The main differences come from Claims 3 and 4, whose proofs need the minimum degree
condition and a new function.

{\bf Claim 3.} Let $\ell=|V(H(G, h))|$. Then $r \leqslant k-\ell \leqslant h$.

\begin{proof} Since each vertex of $H(G, h)$ has degree at least $h+1$, we have $\ell \geqslant h+2$.
Thus $k-\ell \le h$.
Next we show that $\ell\le k-r$, where $\delta(G)\ge r$.
Suppose $\ell\ge k-r+1$.
If $x\in V(G)\backslash V(H(G,h))$, then $x$ is not adjacent to at least one vertex in $ H(G,h)$.
Otherwise, $x\in H(G,h)$.
We pick $x\in V(G)\backslash V(H(G,h))$ and $y\in V(H(G,h))$ satisfying the following two conditions:
(a) $x$ and $y$ are not adjacent, and (b) a longest path in $G$ from $x$ to $y$
contains the largest number of edges among such nonadjacent pairs.
Let $P$ be a longest path in $G$ from $x$ to $y$.
 Then by the maximality of $P$, all neighbors of $x$ in $H(G, h)$ lie in $P$:
 if $x$ has a neighbor $x'\in H(G,h)-P$, then either $x'y\in E(G)$ and $x'P$ is a cycle of length at least $k$,
 or $x'y\notin E(G)$ and so $x'P$ is a longer path.
 Similarly, all neighbors of $y$ in $H(G, h)$ lie in $P$.
Then
by Lemma \ref{posa}, $G$ has a cycle of length at least min$\{k, d_P(x)+d_P(y))\}$=
min$\{k, r+k-r)=k$, a contradiction.
Thus $\ell\le k-r$.
Therefore $r\le k-\ell \le h$.

\end{proof}

{\bf Claim 4.}
$H(G,h)\not= H(G,k-\ell)$.

\begin{proof}
Suppose $H(G,h)= H(G,k-\ell)$.
We divide the proof into the following two cases:

{\bf Case 1.} $s=t$.
Firstly, the number of copies of $K_{s,s}$ contained in $H(G,h)=H(G,k-\ell)$
is at most $\frac{1}{2}\left(\begin{array}{c}
\ell \\
2 s
\end{array}\right)\left(\begin{array}{c}
2 s \\
s
\end{array}\right)$.
Secondly, in $G-H(G,k-\ell)$, every vertex had at most $k-\ell$ neighbors at the time of its deletion.
Therefore
$$
\begin{aligned}
N\left(K_{s, s}, G\right) & \leqslant \sum_{i=1}^{n-\ell}\left(\begin{array}{c}
k-\ell \\
s
\end{array}\right)\left(\begin{array}{c}
n-s-i \\
s-1
\end{array}\right)+\frac{1}{2}\left(\begin{array}{c}
\ell \\
2 s
\end{array}\right)\left(\begin{array}{c}
2 s \\
s
\end{array}\right) \\
& =g_{s, s}(n,k,k-\ell)\\
& \le \max \left\{g_{s, s}(n,k,r), g_{s, s}(n,k,h)\right\},
\end{aligned}
$$
a contradiction to (\ref{gsr}). Thus $H(G,h)\not= H(G,k-\ell)$.

{\bf Case 2.} $s\not=t$. Similarly,
we count the number of copies of $K_{s,t}$ as follows:
$$
\begin{aligned}
N\left(K_{s, t}, G\right)  \leqslant & \sum_{i=1}^{n-\ell}\left(\left(\begin{array}{c}
k-\ell \\
s
\end{array}\right)\left(\begin{array}{c}
n-s-i \\
t-1
\end{array}\right)+\left(\begin{array}{c}
k-\ell \\
t
\end{array}\right)\left(\begin{array}{c}
n-t-i \\
s-1
\end{array}\right)\right)+
\\&
\left(\begin{array}{c}
\ell \\
s+t
\end{array}\right)\left(\begin{array}{c}
s+t \\
s
\end{array}\right) \\
=&g_{s, t}(n,k,k-\ell)\\
 \le &\max \left\{g_{s, t}(n,k,r), g_{s, t}(n,k,h)\right\}
\end{aligned}
$$
a contradiction to (1). Hence $H(G,h)\not= H(G,k-\ell)$.
\end{proof}

{\bf Claim 5 }(\cite{LYZ21}).
$G$ contains a cycle of length at least $k$.

Claim 5 contradicts our assumption. The proof of
Theorem \ref{C} is complete.
%
%


\section{Future Work}
It is natural to raise the following conjecture.
\begin{Conjecture}\label{ConjCB}
Let $G$ be a connected bipartite graph with bipartition $(X, Y)$.
Suppose $|X|=n\le |Y|=b,h=\lfloor\frac{n-k}{2}\rfloor$ and $\delta(G) \geq r\ge 1$, where $n\ge 2k+2r$ and $k\in \mathbb{Z}$.
$$
N(K_{s,t},G)> \begin{cases}
\max \{f_{s, t}(b,n,n-k,r),f_{s, s}(b,n,n-k,h)\}, & s = t, \\
\max \{f_{s, t}(b,n,n-k,r)+ f_{t, s}(b,n,n-k,r),\\f_{s, t}(b,n,n-k,h)+
f_{t, s}(b,n,n-k,h)\}, & s \not= t. \end{cases}
$$
then $G$ contains a cycle of length $2n-2k$.
\end{Conjecture}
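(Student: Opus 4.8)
The plan is to bootstrap from Theorem \ref{CB}, which under exactly these hypotheses already produces a cycle of length \emph{at least} $2n-2k$; the entire content of the conjecture is to sharpen ``at least'' to ``exactly''. I would begin by reusing the construction in the proof of Theorem \ref{CB}: adjoin the two dominating vertices $x\in X$ and $y\in Y$ together with the edge $xy$ to obtain a $2$-connected bipartite graph $G_0$, and pass to the $(2n-2k+2)$-closure $G'$. Theorem \ref{CB} (or rather its proof) then guarantees a cycle $C$ in $G'$ with $|V(C)|=2\ell\ge 2n-2k$. If $\ell=n-k$ and $C$ avoids $x,y$ we are done, so the real work is to descend from a longer cycle to one of length exactly $2n-2k$ that lives inside $G$ itself, i.e.\ avoids the two auxiliary vertices.

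The heart of the argument should be a bipartite \emph{weak bipancyclicity} step: showing that, once $N(K_{s,t},G)$ exceeds the extremal count, a longest cycle of length $2\ell>2n-2k$ forces a cycle of length $2\ell-2$, so that iterating realizes every even length down to $2n-2k$. Two tools seem available. First, the chord machinery behind Lemma \ref{bi2we}: since $G$ is bipartite, any chord $v_iv_j$ of $C$ has $j-i$ odd and splits $C$ into two even cycles of lengths summing to $2\ell+2$; a chord at cyclic distance $3$ yields a $4$-cycle together with a cycle of length $2\ell-2$, and more generally suitable chords or off-cycle vertices with two correctly spaced neighbours on $C$ allow one to trade a long arc for a shorter detour. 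Second, Claims~1--2 in the proof of Theorem \ref{CB} establish that the $(h+1)$-core $H(G',h+1)$ is a \emph{complete} bipartite graph, and complete bipartite graphs are bipancyclic; so the core already supplies cycles of all even lengths up to twice its smaller side. I would try to assemble a cycle of length exactly $2n-2k$ by using the bipancyclic core to absorb an adjustable even number of vertices and splicing it, through internally disjoint paths guaranteed by $2$-connectivity, with the ``outside'' part of the extremal structure, choosing the core-portion so that the total length lands on $2n-2k$.

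The main obstacle is precisely this descent/assembly step, and it is what separates the conjecture from a routine corollary of Theorem \ref{CB}: in a bipartite graph a cycle of length at least $2n-2k$ need not contain cycles of all smaller even lengths (the cycle $C_{2n}$ is the extremal counterexample), so passing from ``at least'' to ``exactly'' has genuine content. The difficulty is compounded because the hypothesis controls the generalized Tur\'an count $N(K_{s,t},G)$ rather than an edge count or an Ore/minimum-degree condition, and translating an excess of $K_{s,t}$-copies into the specific chord or path-splice needed to hit a single target length is delicate; moreover Claim~3 shows the core's smaller side has size only about $\lceil(n-k)/2\rceil$, so the core alone cannot host a $2n-2k$-cycle and the splice with the outside is unavoidable. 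A promising way to close the gap would be to argue by contradiction: if no cycle of length exactly $2n-2k$ exists, show the longest cycle is so rigid that the copies of $K_{s,t}$ must concentrate on a small set of low-degree vertices off $C$, and then re-run the counting of Claim~4 of Theorem \ref{CB}'s proof to contradict the assumed lower bound on $N(K_{s,t},G)$.
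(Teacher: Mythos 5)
You should first note a structural point: the statement you were asked to prove is not proved anywhere in the paper. It appears in Section 4 (``Future Work'') precisely because the authors could not establish it; the paper itself only proves Theorem \ref{CB}, whose conclusion is a cycle of length \emph{at least} $2n-2k$, and the authors describe their result as ``an affirmative evidence supporting'' the exact-length Conjecture \ref{AA09}, not a proof of it. So there is no paper proof to compare against, and your proposal must stand on its own as a complete argument. It does not: you candidly identify the descent step (``at least $2n-2k$'' $\Rightarrow$ ``exactly $2n-2k$'') as the heart of the matter and leave it open, which means the proposal is a research plan rather than a proof. That missing step is the entire content of the conjecture.

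Beyond being incomplete, two of the tools you propose for the descent are unsound as stated. First, the cycles produced inside the proof of Theorem \ref{CB} live in the closure $G'$ (or in the core $H(G',h+1)$, which is complete bipartite only as a subgraph of $G'$); the closure operation adds edges that need not exist in $G$, and in fact the proof of Theorem \ref{CB} is a proof by contradiction in which the long cycle in $G'$ is the contradiction --- it is never converted into a cycle of $G$. So the ``bipancyclic core plus splicing'' construction cannot be carried out in $G$ itself, which is where the cycle of length exactly $2n-2k$ must be found. Second, the chord argument assumes a chord at cyclic distance $3$ (or a suitably spaced pair of neighbours) exists on a longest cycle of $G$, but nothing in the hypothesis delivers such a chord locally; as you yourself note via the $C_{2n}$ example, long even cycles in bipartite graphs need not contain shorter ones, and an excess of $K_{s,t}$-copies concentrated away from the cycle does not obviously produce the needed chord. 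Closing this gap would require a genuinely new ingredient --- for instance a stability analysis in the spirit of Li--Ning \cite{LN16} or F\"{u}redi--Kostochka--Luo \cite{FKL17}, which is exactly what the authors suggest as future work --- so the conjecture remains open after your proposal.
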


Recently, Li and Ning \cite{LN16}, and independently, F\"{u}redi, Kostochka and Luo \cite{FKL17} proved a stability
version of Theorem \ref{MM63}.
It would be interesting, though, to prove a stability
version of Theorem \ref{CB} or Conjecture \ref{ConjCB}, if it is true.


\end{document}